\theoremstyle{definition}
\newtheorem{definition}{Definition}[section]
\newtheorem{example}[definition]{Example}\newtheorem{remark}[definition]{Remark}
\theoremstyle{plain}\newtheorem{theorem}[definition]{Theorem}\newtheorem{corollary}[definition]{Corollary}\newtheorem{proposition}[definition]{Proposition}
\DeclareMathOperator{\M}{\mathcal{M}}
\DeclareMathOperator{\s}{\mathbb{S}}
\DeclareMathOperator{\G}{\mathcal{G}}
\DeclareMathOperator{\R}{\mathbb{R}}
\DeclareMathOperator{\wce}{wce}
\DeclareMathOperator{\rank}{rank}
\DeclareMathOperator{\dist}{dist}
\DeclareMathOperator{\trace}{trace}
\DeclareMathOperator{\sym}{sym}
\DeclareMathOperator*{\argmin}{arg\,min}
\begin{document}
%
\title{Optimal Monte Carlo integration on closed manifolds}


\author{Martin Ehler}
\address{University of Vienna, 
Department of Mathematics, 
Vienna, Austria}
\email{Email: martin.ehler@univie.ac.at}
\author{Manuel Gr\"af}
\address{
Austrian Academy of Sciences,
Acoustics Research Institute, 
Vienna, Austria}
\email{Email: mgraef@kfs.oeaw.ac.at}
\author{Chris. J. Oates}
\address{Newcastle University,
School of Mathematics, Statistics and Physics,
Newcastle-Upon-Tyne, UK}
\email{chris.oates@ncl.ac.uk}


%


\maketitle

\begin{abstract}
The worst case integration error in reproducing kernel Hilbert spaces of standard Monte Carlo methods with $n$ random points decays as $n^{-1/2}$. However, re-weighting of random points can sometimes be used to improve the convergence order. This paper contributes general theoretical results for Sobolev spaces on closed Riemannian manifolds, where we verify that such re-weighting yields optimal approximation rates up to a logarithmic factor. We also provide numerical experiments matching the theoretical results for some Sobolev spaces on the sphere $\s^2$ and on the Grassmannian manifold $\mathcal{G}_{2,4}$. Our theoretical findings also cover function spaces on more general sets such as the unit ball, the cube, and the simplex $\R^d$. 
\end{abstract}

%

\section{Introduction}
Many problems in statistics and the applied sciences require the numerical computation of integrals for an entire class of functions. Given $\M\subset\R^D$, endowed with some probability measure $\mu$, and a function $f:\M\rightarrow \R$, standard Monte Carlo methods approximate the integral 
$\int_{\M} f(x)\mathrm{d}\mu(x)$ by the finite sum 
\begin{equation}\label{eq:first one basic}
\frac{1}{n}\sum_{j=1}^n f(x_j),
\end{equation} 
where $\{x_j\}_{j=1}^n\subset\M$ are independent samples from $\mu$. On the one hand, Monte Carlo integration is widely used in many numerical and statistical applications \cite{Robert:2013}. It is well-known, however, that the expected worst case integration error for $n$ random points using \eqref{eq:first one basic} in reproducing kernel Hilbert spaces does not decay faster than $n^{-1/2}$, cf.~\cite{Brauchart:fk,Breger:2016rc,Hinrichs:2010ij,Nowak:2010rr,Plaskota:2009xu} and \cite[proof of Corollary 2.8]{Graf:2013zl}. To improve the approximation, it has been proposed to re-weight the random points \cite{Briol:2016dz,Oettershagen:2017,Rasmussen:2003wq,Sommariva:2006}, which is of particular importance when $\mu$ can only be sampled \cite{Oates:2017} and evaluating $f$ is rather expensive. 

That re-weighting of {\it deterministic} points can lead to optimal convergence order has been known since the pioneering work of \cite{Bakhvalov:1959}. 
For Sobolev spaces on the sphere and more generally on compact Riemannian manifolds, there are numerically feasible strategies to select deterministic points and weights matching optimal worst case error rates, cf.~\cite{Brandolini:2014oz,Brauchart:fk,Breger:2016vn}, see also \cite{Hellekalek:2016ax,Hinrichs:2015bf,Niederreiter:2003hb}.

The use of \emph{random} points avoids the need to manually specify a point set and can potentially lead to simpler algorithms if the geometry of the manifold $\mathcal{M}$ is complicated. For random points, it was derived in \cite{Briol:2016dz} that the optimal rate for $[0,1]^d$, the sphere, and quite general domains in $\R^d$ can be matched up to a logarithmic factor if the weights are optimized with respect to the underlying reproducing kernel. Decay rates of the worst case integration error for Sobolev spaces of dominating mixed smoothness on the torus and the unit cube were studied in \cite{Oettershagen:2017}. Numerical experiments on the Grassmannian manifold were provided in \cite{Ehler:2017zh}. We refer to \cite{Trefethen:2017gf,Trefethen:2017ek}, for further related results.

The present note is dedicated to verify that, for Sobolev spaces on closed Riemannian manifolds, random points with optimized weights yield optimal decay rates of the worst case error up to a logarithmic factor. We should point out that we additionally allow for the restriction to nonnegative weights, a desirable property not considered in \cite{Briol:2016dz}. Our findings also transfer to functions defined on more general sets such as the $d$-dimensional unit ball and the simplex. 

First, we bound the worst case error by the covering radius of the underlying points. Second, we use estimates on the covering radius of random points from \cite{Reznikov:2015zr}, see also \cite{Brauchart:2016pz} for the sphere, to establish the optimal approximation rate up to a logarithmic factor. Some consequences for popular Bayesian integration methods are then presented. Numerical experiments for the sphere and the Grassmannian manifold are provided that support our theoretical findings. We also discuss the extension to the unit ball, the cube, and the simplex. 

\section{Preliminaries}
Let $\mathcal{M}\subset\R^D$ be a smooth, connected, closed Riemannian manifold of dimension $d$, endowed with the normalized Riemannian measure $\mu$ throughout the manuscript. Prototypical examples for $\M$ are the sphere and the Grassmannian
\begin{align*}
\s^{d}&=\{x\in\R^{d+1} : \|x\|=1\},\\
\G_{k,m} & = \{x\in\R^{m\times m} : x^\top = x,\; x^2 = x,\; \rank(x)=k\},
\end{align*}
where $d = k(m-k)$ with $D=m^2$ in case of the Grassmannian.

Let $\mathcal{H}$ be any normed space of continuous functions $f:\M\rightarrow \R$. For points $\{x_j\}_{j=1}^{n}\subset \mathcal{M}$ and weights
$\{w_j\}_{j=1}^{n} \subset \R$, the worst case error of integration is defined by 
\begin{equation}\label{eq:wce ee}
\wce(\{(x_j, w_j)\}_{j=1}^{n},\mathcal{H}) :=\sup_{\substack{f\in \mathcal{H}\\ \|f\|_{\mathcal{H}}\leq 1}}\Big| \int_{\mathcal{M}} f(x)\mathrm{d}\mu(x)- \sum_{j=1}^{n}  w_j f(x_j) \Big|.
\end{equation}
Suppose now that $\mathcal{H}$ is a reproducing kernel Hilbert space, denoted by $\mathcal{H}_K$, then the squared worst case error can be expressed in terms of the reproducing kernel $K$ by
\begin{align}\label{eq:wce in K}
\begin{split}
\wce(\{(x_j, w_j)\}_{j=1}^{n},\mathcal{H}_K)^2 &= \sum_{i,j=1}^n  w_i w_{j}K(x_i,x_j) - 2  \sum_{j=1}^{n}  w_{j} \int_{\mathcal{M}}K(x_j,y)\mathrm{d}\mu(y)\\
&\qquad +\int_{\mathcal{M}}\int_{\mathcal{M}}K(x,y)\mathrm{d}\mu(x)\mathrm{d}\mu(y).
\end{split}
\end{align}
If $x_1,\ldots,x_n\in\mathcal{M}$ are random points, independently distributed according to $\mu$, then it holds
\begin{equation}\label{eq:rand points}
  \sqrt{\mathbb{E} \Big[ \wce(\{(x_j,\tfrac{1}{n})\}_{j=1}^{n},\mathcal{H}_K)^2\Big]}  \asymp n^{-\frac{1}{2}},
\end{equation}
cf.~\cite{Brauchart:fk,Breger:2016rc,Nowak:2010rr} and \cite[Proof of Corollary 2.8]{Graf:2013zl}. Hence, even if $\mathcal{H}_K$ consists of arbitrarily smooth functions, the left hand side of \eqref{eq:rand points} decays only like $n^{-\frac{1}{2}}$.

The present note is dedicated to the question if and, as the case may be, how much one can actually improve the error rate in \eqref{eq:rand points} when replacing the equal weights $1/n$ with weights $\{ w_j\}_{j=1}^n$ that are customized to the random points $\{x_j\}_{j=1}^n$.  

\section{Bounding the worst case error by the covering radius}
To define appropriate smoothness spaces, let $\Delta$ denote the Laplace-Beltrami operator on $\M$ and let $\{\varphi_\ell\}_{\ell=0}^\infty$ be the collection of its orthonormal
eigenfunctions with eigenvalues $\{-\lambda_\ell\}_{\ell=0}^\infty$ arranged by
$0=\lambda_0 \leq \lambda_1\leq \ldots$. We choose each $\varphi_\ell$, $\ell=0,1,2,\ldots$, to be real-valued with $\varphi_0\equiv 1$. Given $f\in L_p(\mathcal{M})$ with $1\leq p\leq \infty$, the Fourier transform is defined by 
\begin{equation*}
\hat{f}(\ell):=\int_{\mathcal{M}} f(x)\varphi_\ell(x)\mathrm{d}\mu(x),\qquad \ell=0,1,2,\ldots,
\end{equation*}
with the usual extension to distributions on $\mathcal{M}$. For $1\leq p\leq \infty$ and $s>0$, the Sobolev space $H^s_p(\mathcal{M})$ is the collection of all distributions on $\mathcal{M}$ with $(I-\Delta)^{s/2}f\in L_p(\mathcal{M})$, i.e., with 
\begin{equation}\label{eq:def norm sob}
\| f\|_{H^s_p}  := \|(I-\Delta)^{s/2} f \|_{L_{p}} = \|\sum_{\ell=0}^\infty  (1+\lambda_\ell)^{s/2} \hat{f}(\ell) \varphi_\ell \|_{L_p}<\infty.
\end{equation}
For $s>d/p$, each function in $H^s_p(\mathcal{M})$ is continuous, cf.~\cite{Brandolini:2014oz} and \cite[Theorem 7.4.5, Section 7.4.2]{Triebel:1992aa}, so that point evaluation makes sense.

For $s>d/p$ and any set of points $\{x_j\}_{j=1}^n\subset\M$ with arbitrary weights $\{ w_j\}_{j=1}^n\subset\R$, we have 
\begin{equation}\label{eq:Brandi}
n^{-s/d} \lesssim \wce(\{(x_j, w_j)\}_{j=1}^{n},H_p^s(\M)),
\end{equation}
see \cite{Brauchart:fk} for the sphere and \cite{Brandolini:2014oz} for the general case. Note that the constant in \eqref{eq:Brandi} may depend on $s$, $\M$, and $p$. 

Another lower bound involves the covering radius, 
\begin{equation*}
\rho_n : = \max_{x\in\M} \min_{j=1,\ldots,n} \dist_{\M}(x,x_j),
\end{equation*}
where $\dist_{\M}$ denotes the geodesic distance. According to \cite{Breger:2016rc}, it also holds
\begin{equation}\label{eq:b r 1}
\rho_n^{s+d/q} \lesssim \wce(\{(x_j, w_j)\}_{j=1}^{n},H^s_p(\mathcal{M})),
\end{equation}
where $1/p+1/q=1$.

Attempting to match this lower bound, we shall optimize the weights. Given points $\{x_j\}_{j=1}^n\subset\M$, we define optimal weights with nonnegativity constraints by
\begin{equation}\label{eq:opt weights def 2}
\{\widehat{ w}^{\geq 0;\,p}_j\}_{j=1}^n:= \argmin_{ w_1,\ldots, w_n\geq 0}\wce(\{(x_j, w_j)\}_{j=1}^{n},H_p^s(\M)).
\end{equation}
The worst case error for the optimized weights is upper bounded by the covering radius:
\begin{theorem}\label{th:fundamentals 2}
Let $1\leq p\leq\infty$, suppose $s>d/p$, and let $\{x_j\}_{j=1}^n\subset\M$ be any set of points with covering radius $\rho_n$. Then the optimized weights $\{\widehat{ w}^{\geq 0;\, p}_j\}_{j=1}^n$ in \eqref{eq:opt weights def 2} satisfy
\begin{equation}\label{eq:in theorem 2}
\wce(\{(x_j,\widehat{ w}^{\geq 0;\,p}_j)\}_{j=1}^{n},H_p^s(\M)) \lesssim \rho_n^s.
\end{equation}
\end{theorem}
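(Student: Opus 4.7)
The strategy is to exploit that $\widehat{w}^{\geq 0;p}$ is an argmin over all non-negative weights, so it suffices to exhibit one explicit family $\{w_j\}_{j=1}^n\subset[0,\infty)$ for which $\wce(\{(x_j,w_j)\}_{j=1}^n,H^s_p(\M))\lesssim\rho_n^s$; the optimized weights then inherit the bound. The natural candidate is a positive polynomial cubature associated with $\{x_j\}_{j=1}^n$: by \cite{Brandolini:2014oz}, there is a small constant $c=c(\M)>0$ such that, with $L=c/\rho_n$, one can choose $w_j\ge 0$ satisfying $\sum_j w_j=1$, $w_j\lesssim\rho_n^d$, and $\sum_j w_j P(x_j)=\int_{\M} P\,d\mu$ for every $P$ in the diffusion polynomial space $\mathrm{span}\{\varphi_\ell:\lambda_\ell\le L^2\}$.

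For such weights the error functional $E(f):=\int_{\M} f\,d\mu-\sum_j w_j f(x_j)$ vanishes on the above polynomial space, so $E(f)=E(f-P)$ for any admissible polynomial $P$. Taking $P$ to be a best $L_p$-polynomial approximant of $f$, the Jackson inequality on $\M$ yields $\|f-P\|_{L_p}\lesssim L^{-s}\|f\|_{H^s_p}\asymp\rho_n^s\|f\|_{H^s_p}$. Splitting $|E(f-P)|\le\|f-P\|_{L_1}+\sum_j w_j|(f-P)(x_j)|$ and using $\|\cdot\|_{L_1}\le\|\cdot\|_{L_p}$ (because $\mu$ is a probability measure) immediately gives the bound $\rho_n^s\|f\|_{H^s_p}$ for the first term.

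The main technical obstacle is to control the point-evaluation sum $\sum_j w_j|(f-P)(x_j)|$ by the same $\rho_n^s$ rate. A crude estimate via $\sum_j w_j=1$ and the Sobolev embedding $H^s_p\hookrightarrow L_\infty$ only produces $\rho_n^{s-d/p}$, which is insufficient when $p>1$. To recover the sharp rate, I would decompose $f-P$ into Littlewood--Paley dyadic blocks and apply a Marcinkiewicz--Zygmund-type inequality available for positive cubatures: the combination of non-negativity and polynomial exactness allows the discrete $\ell_p(w)$-norm of a polynomial block to be controlled by its $L_p(\mu)$-norm. Using Bernstein's and Jackson's inequalities on $\M$ to bound each block-wise contribution by $2^{-ks}\|f\|_{H^s_p}$ and summing the resulting geometric series over the scales $2^k\gtrsim L$ then produces the claimed $\rho_n^s\|f\|_{H^s_p}$ bound and completes the proof.
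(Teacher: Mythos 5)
Your proposal is correct and follows essentially the same route as the paper: both reduce the problem, via the argmin comparison, to exhibiting one set of nonnegative weights giving a cubature exact on diffusion polynomials of degree $\asymp 1/\rho_n$, and then bound its worst case error by $\rho_n^s$ using the Jackson/Littlewood--Paley/Marcinkiewicz--Zygmund machinery for positive exact rules. The paper outsources exactly the two steps you sketch to the literature --- the existence of such weights (after first extracting a quasi-uniform subset $Y\subset X$ with $\rho(Y)\le 2\delta(Y)$ so as to meet the hypotheses of \cite{Filbir:2010aa}, the remaining points receiving weight zero) and the $\rho_n^s$ bound for positive exact rules, which is \cite[Lemma 2.14, Corollary 2.15]{Brandolini:2014oz}, i.e.\ precisely the Marcinkiewicz--Zygmund step you defer and which is genuinely nontrivial on a manifold since products of Laplace--Beltrami eigenfunctions are not band-limited, so exactness cannot be applied to $|P|^p$ directly and one must argue through localized kernels.
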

Note that the constant in \eqref{eq:in theorem 2} may depend on $\M$, $s$, and $p$. 
\begin{remark}\label{rem:olala}
If we fix a constant $c>0$, independent of $n$ and $\{x_j\}_{j=1}^n$, then any weights $\{\widetilde{ w}^p_j\}_{j=1}^n\subset\R$ with
\begin{equation*}
\wce(\{(x_j,\widetilde{ w}^{p}_j)\}_{j=1}^{n},H_p^s(\M))\leq c \cdot \wce(\{(x_j,\widehat{ w}^{\geq 0;\,p}_j)\}_{j=1}^{n},H_p^s(\M))
\end{equation*}
satisfy the estimate
\begin{equation}\label{eq:in theorem 22}
\wce(\{(x_j,\widetilde{ w}^{p}_j)\}_{j=1}^{n},H_p^s(\M)) \lesssim \rho_n^s.
\end{equation}
This fact is beneficial when we compute weights numerically.
\end{remark}
\begin{proof}[Proof of Theorem \ref{th:fundamentals 2}]
Let $X:=\{x_j\}_{j=1}^n$ and $\rho(X):=\rho_n$. There is a subset $Y=\{y_j\}_{j=1}^m\subset X$ with covering radius $\rho(Y)\leq 2\rho(X)$ and minimal separation 
\begin{equation*}
\delta(Y):=\min_{\substack{a,b\in Y\\ a\neq b}} \dist_{\M}(a,b)
\end{equation*}
such that $\rho(Y)\leq 2\delta(Y)$, cf.~\cite[Section 3]{Filbir:2010aa}. We observe that our present setting satisfies the technical requirements of \cite{Filbir:2010aa}, cf.~\cite{Hsu:1999oh} and \cite[page 159]{Chavel:1984cr}. We deduce from \cite[Lemma 2.14]{Brandolini:2014oz} and \cite[Theorem 3.1]{Filbir:2010aa}, see also \cite{Mhaskar:2002ys} for $\M=\s^d$, with \cite[Corollary 2.15]{Brandolini:2014oz} that there exist $ w_1,\ldots, w_m\gtrsim \rho_n^d$, such that
\begin{equation*}
\wce(\{(y_j, w_j)\}_{j=1}^{m},H_p^s(\M)) \lesssim m^{-s/d}. 
\end{equation*}
Since $\wce(\{(x_j,\widehat{ w}^{\geq 0;\, p}_j)\}_{j=1}^{n},H_p^s(\M))\leq \wce(\{(y_j, w_j)\}_{j=1}^{m},H_p^s(\M))$, we also obtain 
\begin{equation*}
\wce(\{(x_j,\widehat{ w}^{\geq 0;\, p}_j)\}_{j=1}^{n},H_p^s(\M)) \lesssim m^{-s/d}. 
\end{equation*}
The general lower bound on the covering radius $m^{-1/d}\lesssim \rho(Y)$, cf.~\cite{Breger:2016rc}, implies 
\begin{equation*}
m^{-s/d}\lesssim \rho(Y)^s\lesssim \rho(X)^s,
\end{equation*}
which concludes the proof.
\end{proof}
Combining \eqref{eq:b r 1} with Theorem \ref{th:fundamentals 2} for $p=1$ yields 
\begin{equation}\label{eq:in theorem 3}
\wce(\{(x_j,\widehat{ w}^{\geq 0;\,1}_j)\}_{j=1}^{n},H_1^s(\M)) \asymp \rho_n^s,
\end{equation}
so that the worst case error's asymptotic behavior is governed by the covering radius.

\begin{remark}
The above proof reveals that in the setting of Theorem \ref{th:fundamentals 2} there exist $\{ w_j\}_{j=1}^n$ with either $ w_j\gtrsim \rho^d_n$ or $ w_j=0$, for $j=1,\ldots,n$, such that 
\begin{equation*}
\wce(\{(x_j, w_j)\}_{j=1}^{n},H_p^s(\M)) \lesssim \rho_n^s.
\end{equation*}
\end{remark}
The covering radius $\rho_n$ of any $n$ points in $\M$ is lower bounded by $\gtrsim n^{-1/d}$, which follows from standard volume arguments.  
If $\{x_j\}_{j=1}^n\subset\M$ are points with asymptotically optimal covering radius, i.e., $\rho_n\asymp  n^{-1/d}$, then 
Theorem \ref{th:fundamentals 2} yields the optimal rate for the worst case integration error
\begin{equation}\label{eq:in theorem 3}
\wce(\{(x_j,\widehat{ w}^{\geq 0;\, p}_j)\}_{j=1}^{n},H_p^s(\M)) \lesssim  n^{-s/d},
\end{equation}
cf.~\eqref{eq:Brandi}. 

Several point sets on $\s^2$ with asymptotically optimal covering radius are discussed in \cite{Hardin:2016fv}, see quasi-uniform point sequences therein, and see \cite{Breger:2016rc} for general $\M$. The covering radius of random points is studied in \cite{Brauchart:2016pz,Oates:2017mw,Reznikov:2015zr}, which leads to almost optimal bounds on the worst case error in the subsequent section. Although we shall consider independent random points, it is noteworthy that it is verified in \cite{Oates:2017mw} that the required estimates on the covering radius still hold for random points arising from a Markov chain instead of being independent. 
Note also that results related to Theorem \ref{th:fundamentals 2} are derived in \cite{Mhaskar:2017nr} for more general spaces $\M$.

\section{Consequences for random points}\label{sec:random points}
For random points $\{x_j\}_{j=1}^n\subset\M$ and any weights $\{ w_j\}_{j=1}^n\subset\R$, no matter if random or not, \eqref{eq:Brandi} implies, for all $r>0$, 
\begin{equation}\label{eq:opt est below}
n^{-s/d}\lesssim \Big(\mathbb{E}\big[\wce(\{(x_j, w_j)\}_{j=1}^{n},H_p^s(\M))^r\big]\Big)^{1/r},
\end{equation}
where the constant may depend on $s$, $p$,  and $\M$. Note that if $\{x_j\}_{j=1}^n\subset\M$ are random points, then the weights $\{\widehat{ w}^{\geq 0;\, p}_j\}_{j=1}^n$ are random as well. We shall deduce that Theorem \ref{th:fundamentals 2} implies that the optimal worst case error rate is (almost) matched in these cases:
\begin{corollary}\label{th:only one}
Let $\{x_j\}_{j=1}^n\subset\M$ be random points, independently distributed according to $\mu$. Suppose $1\leq p\leq\infty $ and $s>d/p$, then, for each $r\geq 1/s$, it holds
\begin{equation}\label{optimality}
\Big(\mathbb{E}\big[\wce(\{(x_j,\widehat{ w}^{\geq 0;\, p}_j)\}_{j=1}^{n},H_p^s(\M))^r\big]\Big)^{1/r}  \lesssim n^{-s/d}\log(n)^{s/d}.
\end{equation}
\end{corollary}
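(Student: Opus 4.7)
The plan is to combine Theorem \ref{th:fundamentals 2}, which is a deterministic pointwise estimate, with probabilistic estimates on the covering radius of i.i.d.\ samples that are already cited in the paper (Reznikov--Saff \cite{Reznikov:2015zr}, see also \cite{Brauchart:2016pz,Oates:2017mw}). The reduction is immediate; the only care is in the choice of moment exponent.

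First, I would invoke Theorem \ref{th:fundamentals 2} pointwise in the sample $\omega$. Since the hypotheses $1\leq p\leq\infty$ and $s>d/p$ are in force, and $\widehat{w}^{\geq 0;\,p}_j$ minimizes the worst case error over nonnegative weights, the theorem gives, with a deterministic constant $C=C(\M,s,p)$,
\begin{equation*}
\wce(\{(x_j,\widehat{w}^{\geq 0;\,p}_j)\}_{j=1}^{n},H_p^s(\M)) \;\leq\; C\,\rho_n^{\,s}
\end{equation*}
almost surely, where $\rho_n$ is the (random) covering radius of $\{x_j\}_{j=1}^n$. Raising to the $r$-th power and taking expectations,
\begin{equation*}
\Big(\mathbb{E}\big[\wce(\{(x_j,\widehat{w}^{\geq 0;\,p}_j)\}_{j=1}^{n},H_p^s(\M))^r\big]\Big)^{1/r} \;\leq\; C\,\big(\mathbb{E}[\rho_n^{\,rs}]\big)^{1/r}.
\end{equation*}

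Second, I would use the known moment bound on the covering radius of i.i.d.\ points drawn from the normalized Riemannian measure on a smooth closed manifold, namely
\begin{equation*}
\big(\mathbb{E}[\rho_n^{\,t}]\big)^{1/t} \;\lesssim\; \Big(\tfrac{\log n}{n}\Big)^{1/d},
\end{equation*}
which follows from the results of \cite{Reznikov:2015zr} (or \cite{Brauchart:2016pz} for the sphere; see also \cite{Oates:2017mw}). This bound is available provided $t\geq 1$, which is precisely where the hypothesis $r\geq 1/s$ enters: we apply it with $t=rs\geq 1$. Substituting $t=rs$ into the above yields
\begin{equation*}
\big(\mathbb{E}[\rho_n^{\,rs}]\big)^{1/r} \;\lesssim\; \Big(\tfrac{\log n}{n}\Big)^{s/d} \;=\; n^{-s/d}\log(n)^{s/d},
\end{equation*}
which, combined with the first step, gives \eqref{optimality}.

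The main obstacle, such as it is, is not in the argument itself but in lining up the moment version of the covering radius estimate with the Sobolev exponent. The deterministic estimate in Theorem \ref{th:fundamentals 2} decouples the geometric content (covering radius) from the functional-analytic content (the Sobolev space), so once the correct moment bound for $\rho_n$ is invoked the corollary follows by a single line of H\"older-free manipulation. The restriction $r\geq 1/s$ is exactly the threshold needed to directly apply the available moment bounds on $\rho_n$ without an additional Jensen step; for smaller $r$ one could still deduce a weaker statement by monotonicity of $L^r$-norms, but this would not improve the exponent.
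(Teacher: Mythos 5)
Your proposal is correct and follows essentially the same route as the paper: apply Theorem \ref{th:fundamentals 2} pathwise to reduce to the covering radius, then invoke the moment estimate $(\mathbb{E}\rho_n^t)^{1/t}\lesssim (\log n/n)^{1/d}$ from \cite{Reznikov:2015zr} with $t=rs\geq 1$, which is exactly how the hypothesis $r\geq 1/s$ is used in the paper's proof as well.
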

Note that Corollary \ref{th:only one} yields the optimal rate up to the logarithmic factor $\log(n)^{s/d}$, cf.~\eqref{eq:opt est below}, and that the constant in \eqref{optimality} may depend on $s$, $\M$, $p$, and $r$.
\begin{proof}
From \cite[Theorem 3.2, Corollary 3.3]{Reznikov:2015zr} we deduce that, for each $r\geq 1$, 
\begin{equation}\label{eq:est cov radius from saff}
\big(\mathbb{E}\rho^r_n\big)^{1/r} \asymp n^{-1/d}\log(n)^{1/d},
\end{equation}
where the constant may depend on $\M$ and $r$. Thus, Theorem \ref{th:fundamentals 2} implies 
\begin{equation*}
\mathbb{E}\big[\wce(\{(x_j,\widehat{ w}^{\geq 0;\, p}_j)\}_{j=1}^{n},H_p^s(\M))^r\big]\lesssim \big(\tfrac{\log(n)}{n}\big)^{sr/d},
\end{equation*}
for each $r\geq 1/s$. 
\end{proof}
\begin{remark}
Let $\nu$ be a probability measure on $\M$ that is absolutely continuous with respect to $\mu$ and its density is bounded away from zero, i.e., $\nu=f \mu$ with $f(x)\geq c>0$, for all $x\in\M$. Corollary \ref{th:only one} still holds for independent samples from $\nu$, where the constant in \eqref{optimality} then also depends on $c$. This is due to $2n/c$ independent samples from $\nu$ covering $\M$ at least as good as $n$ independent samples from $\mu$. 
\end{remark}

Corollary \ref{th:only one} yields bounds on the moments of the worst case integration error. The results in \cite{Reznikov:2015zr} also enable us to derive probability estimates:
\begin{corollary}\label{co 22}
Under the assumptions of Corollary \ref{th:only one}, there are positive constants $c_1,\ldots,c_4$ depending on $\M$ and where $c_2$ may additionally depend on $s$ and $p$, such that, 
\begin{equation*}
\mathbb{P}\Big(\wce(\{(x_j,\widehat{ w}^{\geq 0;\, p}_j)\}_{j=1}^{n},H_p^s(\M))  \geq c_2 \big(\tfrac{r\log(n)}{n}\big)^{s/d} \Big)\leq c_3 \big(\tfrac{1}{n}\big)^{c_4r-1},
\end{equation*}
for all $r\geq c_1$.
\end{corollary}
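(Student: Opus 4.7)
The plan is to reduce the worst case error bound to a tail estimate on the covering radius, then invoke the probabilistic results of Reznikov--Saff underlying the moment bound \eqref{eq:est cov radius from saff}. By Theorem \ref{th:fundamentals 2}, there is a constant $C=C(\M,s,p)$ such that, deterministically for every realization of the random points,
\begin{equation*}
\wce(\{(x_j,\widehat{ w}^{\geq 0;\, p}_j)\}_{j=1}^{n},H_p^s(\M)) \leq C \rho_n^s.
\end{equation*}
Therefore, for any threshold $T>0$,
\begin{equation*}
\mathbb{P}\bigl(\wce(\{(x_j,\widehat{ w}^{\geq 0;\, p}_j)\}_{j=1}^{n},H_p^s(\M))\geq C T^{s}\bigr)\leq \mathbb{P}\bigl(\rho_n\geq T\bigr),
\end{equation*}
so the entire corollary is equivalent to an appropriate tail bound on $\rho_n$.

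Next, I would extract from \cite[Theorem 3.2, Corollary 3.3]{Reznikov:2015zr} the probabilistic tail estimate (the same ingredient from which the moments \eqref{eq:est cov radius from saff} were derived): there exist positive constants $c_1,c_2',c_3,c_4$, depending only on $\M$, with
\begin{equation*}
\mathbb{P}\Bigl(\rho_n\geq c_2'\bigl(\tfrac{r\log(n)}{n}\bigr)^{1/d}\Bigr)\leq c_3\bigl(\tfrac{1}{n}\bigr)^{c_4 r-1}, \qquad r\geq c_1.
\end{equation*}
The standard route, if one wanted to prove this from scratch, is the union-bound argument: on a closed $d$-dimensional Riemannian manifold the measure of a geodesic ball satisfies $\mu(B(x,t))\gtrsim t^d$ for small $t$, so the probability that a fixed $x$ is uncovered after $n$ i.i.d. draws is at most $(1-c t^d)^n\leq e^{-cn t^d}$; covering $\M$ with $\lesssim t^{-d}$ balls of radius $t/2$ and using a union bound yields a tail of the form $t^{-d}e^{-c n t^d}$, and substituting $t=c_2'(r\log n/n)^{1/d}$ produces precisely the stated decay $n^{1-c_4 r}$ (absorbing the $t^{-d}$ prefactor into the exponent for $r$ sufficiently large, i.e.\ for $r\geq c_1$).

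Finally, I would combine the two ingredients by choosing $T=c_2'(r\log n/n)^{1/d}$, which gives
\begin{equation*}
\mathbb{P}\Bigl(\wce(\{(x_j,\widehat{ w}^{\geq 0;\, p}_j)\}_{j=1}^{n},H_p^s(\M))\geq C(c_2')^{s}\bigl(\tfrac{r\log(n)}{n}\bigr)^{s/d}\Bigr)\leq c_3\bigl(\tfrac{1}{n}\bigr)^{c_4 r-1},
\end{equation*}
and setting $c_2:=C(c_2')^s$ yields the assertion, with $c_2$ carrying the additional dependence on $s$ and $p$ through Theorem \ref{th:fundamentals 2} while $c_1,c_3,c_4$ depend only on $\M$. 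The only genuine difficulty is locating (or quickly verifying) the tail bound on $\rho_n$ of the precise form $c_3 n^{1-c_4 r}$ in \cite{Reznikov:2015zr}; once this is in hand, the corollary is a one-line consequence of Theorem \ref{th:fundamentals 2}.
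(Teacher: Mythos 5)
Your proposal is correct and follows essentially the same route as the paper: apply Theorem \ref{th:fundamentals 2} to get the deterministic bound $\wce \leq c\,\rho_n^s$, invoke the tail estimate on the covering radius of i.i.d.\ points from \cite{Reznikov:2015zr} (the paper cites Theorem 2.1 there rather than Theorem 3.2/Corollary 3.3, but it is exactly the tail bound you state), and combine by raising to the power $s$ and setting $c_2 = c (c_2')^s$. The union-bound sketch you include is a correct but unnecessary supplement, since the paper simply cites the result.
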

\begin{proof}
By applying Theorem \ref{th:fundamentals 2} we deduce that there is a constant $c>0$, which may depend on $\M$, $s$, and $p$, such that
\begin{equation*}
\wce(\{(x_j,\widehat{ w}^{\geq 0;\, p}_j)\}_{j=1}^{n},H_p^s(\M)) \leq c \rho_n^s.
\end{equation*}
According to \cite[Theorem 2.1]{Reznikov:2015zr}, there are constants $c_1,\tilde{c}_2, c_3,c_4>0$, which may depend on $\M$, such that, for all $r\geq c_1$, 
\begin{equation*}
\mathbb{P}\Big( \rho_n\geq \tilde{c}_2\big(\tfrac{r\log(n)}{n}\big)^{1/d} \Big)\leq c_3 \big(\tfrac{1}{n}\big)^{c_4r-1}.
\end{equation*}
Raising the left inequality to the power $s$ and multiplying by $c$ yields the desired result with $c_2:=c\tilde{c}^s_2$. 
\end{proof}
\begin{remark}\label{remark:here and there}
Corollary \ref{th:only one} and Corollary \ref{co 22} also hold for weights $\{\widetilde{ w}_j^p\}_{j=1}^n$ that minimize \eqref{eq:opt weights def 2} up to a constant factor as discussed in Remark \ref{rem:olala}, and, in particular, for the unconstrained minimizer 
\begin{equation}\label{eq:opt weights def}
\{\widehat{ w}^p_j\}_{j=1}^n:= \argmin_{\{ w_j\}_{j=1}^n\subset \R}\wce(\{(x_j, w_j)\}_{j=1}^{n},H_p^s(\M)).
\end{equation}
Nonnegative weights are often more desirable for numerical applications of cubatures points, but solving the constrained minimization problem \eqref{eq:opt weights def 2} is usually more involved than dealing with the unconstrained problem \eqref{eq:opt weights def}.

\end{remark}

\section{Relations to Bayesian Monte Carlo}
Our results have consequences for {\it Bayesian cubature}, cf.~\cite{Larkin:1972}, an integration method whose output is not a scalar but a distribution. Bayesian cubature enables a statistical quantification of integration error, useful in the context of a wider computational work-flow to measure the impact of integration error on subsequent output, cf.~\cite{Briol:2016dz,Cockayne:2017}.

Consider a linear topological space $\mathcal{L}$ of continuous functions on $\mathcal{M}$. The integrand $f$ in Bayesian cubature is treated as a Gaussian random process; that is, $f : \mathcal{M} \times \Omega \rightarrow \mathbb{R}$, where $f(\cdot,\omega) \in \mathcal{L}$ for each $\omega \in \Omega$, and the random variables $\omega \mapsto L f(\cdot,\omega) \in \mathcal{L}$ are (univariate) Gaussian for all continuous linear functionals $L$ on $\mathcal{L}$, such as integration ($\mathcal{I} f = \int_{\mathcal{M}} f(x) \mathrm{d} \mu(x)$) and point evaluation ($\delta_x f = f(x)$) operators, cf.~[2]. The Bayesian approach is then taken, wherein the process $f$ is constrained to interpolate the values $\{(x_j,f(x_j))\}_{j=1}^n$. Formally, this is achieved by conditioning the process on the data provided through the point evaluation operators $\delta_{x_j}(f) = f(x_j)$, for $\{x_j\}_{j=1}^n \subset \mathcal{M}$. The conditioned process, denoted $f_n$, is again Gaussian (cf.~[2]) and as such the linear functional $\mathcal{I}  f_n$ is a (univariate) Gaussian; this is the output of the Bayesian cubature method. This distribution, defined on the real line, provides statistical uncertainty quantification for the (unknown) true value of the integral.


Concretely, let $K(x,y) = \text{cov}(f(x),f(y))$ denote the covariance function that characterizes the Gaussian probability model. The output of Bayesian cubature is the univariate Gaussian distribution with mean 
\begin{equation}\label{eq:sder}
\big(f(x_1),\ldots,f(x_n)  \big) {\underbrace{\left[ \begin{array}{ccc} K(x_1,x_1) & \dots & K(x_1,x_n) \\ \vdots & & \vdots \\ K(x_n,x_1) & \dots & K(x_n,x_n) \end{array} \right]}_{\mathcal{K}}}^{-1}  \underbrace{\left[ \begin{array}{c} \int_{\M} K(x_1,y) \mathrm{d}\mu(y) \\ \vdots \\ \int_{\M} K(x_n,y) \mathrm{d}\mu(y) \end{array} \right]}_{b},
\end{equation}
cf.~\cite{Briol:2016dz}. 
This expression is recognized as a weighted integration method with weights $\widehat{w} = (\widehat{w}_1,\dots,\widehat{w}_n)^\top$ implicitly defined by $\widehat{w}:=\mathcal{K}^{-1}b$, so that \eqref{eq:sder} becomes
\begin{equation*}
\sum_{j=1}^n \widehat{w}_j f(x_j).
\end{equation*}
Any symmetric positive definite covariance function $K$ can be viewed as a reproducing kernel. In particular, the Bessel kernel 
\begin{equation}\label{eq:Bessel def}
K^{(s)}_{B}(x,y)=\sum_{\ell=0}^\infty (1+\lambda_\ell)^{-s} \varphi_\ell(x)\varphi_\ell(y),\quad x,y\in\M,
\end{equation}
reproduces $H^s(\M):=H^s_2(\M)$, for $s>d/2$. Observe that the weights $\widehat{w}$ just defined solve the unconstrained minimization problem \eqref{eq:opt weights def} for $p=2$. 
The latter follows from the quadratic minimization form in \eqref{eq:wce in K} as well as from the posterior mean being an $L_2$-optimal estimator \cite{Kimeldorf:1970}. 

The variance of the Gaussian measure can be shown to be formally equal to \eqref{eq:wce in K} when these weights are substituted, see \cite{Briol:2016dz}. The special case where the points $\{x_j\}_{j=1}^n$ are random was termed {\it Bayesian Monte Carlo} in \cite{Rasmussen:2003wq}. Therefore, our results in Section \ref{sec:random points} have direct consequences for Bayesian Monte Carlo. Due to Remark \ref{remark:here and there} within this Bayesian setting, Corollary \ref{th:only one} and Corollary \ref{co 22} generalize earlier work of \cite{Briol:2016dz} to a general smooth, connected, closed Riemannian manifold.

\section{Numerical experiments for the sphere and the Grassmannian}
Numerically computing the worst case error $\wce(\{(x_j, w_j)\}_{j=1}^{n},H_p^s(\M))$ is difficult in general but, for $p=2$, it is expressed in terms of the reproducing kernel in \eqref{eq:wce in K}. Therefore, our numerical experiments are designed for $p=2$. However, the kernel $K^{(s)}_B$ itself, see \eqref{eq:Bessel def}, may still be difficult to evaluate numerically, so that we would like to allow for other kernels in numerical experiments. If $K$ is any positive definite kernel on $\M$ that reproduces $H^s(\M)$ with equivalent norms, then 
\begin{equation*}
\wce(\{(x_j, w_j)\}_{j=1}^{n},H^s(\M)) \asymp\wce(\{(x_j, w_j)\}_{j=1}^{n},\mathcal{H}_K).
\end{equation*}
Therefore, the asymptotic results in Corollary \ref{th:only one} and Corollary \ref{co 22} are the same when replacing $\{\widehat{ w}^{\geq 0;\, 2}_j\}_{j=1}^n$ with the minimizer
\begin{equation}\label{eq:opt mit kern}
\{\widehat{ w}_j^{\geq 0;K}\}_{j=1}^n:= \argmin_{ w_1,\ldots, w_n\geq 0}\wce(\{(x_j, w_j)\}_{j=1}^{n},\mathcal{H}_K).
\end{equation}
%
%
%
Dropping the nonnegativity constraints yields $\widehat{ w}^{K}$, which is given by $\widehat{w} = \mathcal{K}^{-1}b$, where $\mathcal{K}$ and $b$ are as in \eqref{eq:sder}. 
To provide numerical experiments for Sobolev spaces on the sphere $\s^2\subset\R^3$ and on the Grassmannian $\mathcal{G}_{2,4}$, we shall specify suitable kernels in the following. We shall consider two kernels $K_1,K_2$ on the sphere $\s^2$ and two kernels $K_3,K_4$ on the Grassmannian $\mathcal{G}_{2,4}$. 

The numerical results are produced by taking sequences of random points $\{x_j\}_{j=1}^n$ with increasing cardinality $n$. We compute each of the three worst case errors
\begin{equation*}
\wce(\{(x_j,\tfrac{1}{n})\}_{j=1}^{n},\mathcal{H}_{K_i}),\quad \wce(\{(x_j,\widehat{ w}^{K_i}_j)\}_{j=1}^{n},\mathcal{H}_{K_i}), \quad\wce(\{(x_j,\widehat{ w}^{\geq 0;K_i}_j)\}_{j=1}^{n},\mathcal{H}_{K_i}),
\end{equation*}
for $i=1,\ldots,4$, and averaged these results over 20 instantiations of the random points. The constrained minimization problem for the latter two quantities is solved by using the Python CVXOPT library. It should be mentioned that numerical experiments on the sphere for the unconstrained optimizer $\widehat{ w}^{K_1}$ are also contained in \cite{Briol:2016dz}. 

The kernel
\begin{equation*}
K_{1}(x,y)  := 2 - \|x-y\|,\quad x,y\in\s^2,
\end{equation*}
reproduces the Sobolev space $H^{3/2}(\s^2)$ with an equivalent norm, cf.~\cite[Section 6.4.1]{Graf:2013zl}. To compute \eqref{eq:wce in K} and \eqref{eq:sder}, it is sufficient to notice
\begin{equation*}
\int_{\mathbb S^2} K_1(x,y) \mathrm{d}\mu(y) = \frac{2}{3}, \quad\text{for all } x\in \s^2.
\end{equation*}

By plotting the worst case error versus the number of points in a logarithmic scale, we are supposed to observe lines whose slopes coincide with the decay rate $-s/d$ for the optimized weights and slope $-1/2$ for the weights $1/n$. Indeed, we see in Figure \ref{subfig:1} that $\wce(\{(x_j,\tfrac{1}{n})\}_{j=1}^{n},\mathcal{H}_{K_1})$ for random points matches the error rate $-1/2$ predicted by \eqref{eq:rand points} with $d=2$. When optimizing the weights, we observe the decay rate $-3/4$ for both optimizations, $\widehat{ w}^{\geq 0;K}$ in \eqref{eq:opt mit kern} and the unconstrained minimizer $\widehat{ w}^K$. Hence, the numerical results match the rate predicted by the theoretical findings in \eqref{eq:opt est below}, \eqref{optimality} with $p=2$ and $r=1$. The logarithmic factor in \eqref{optimality} is not visible.

\begin{figure}
\subfigure[Optimized weights for $K_1$ yield decay $-3/4$]{
 \includegraphics[width=.45\textwidth]{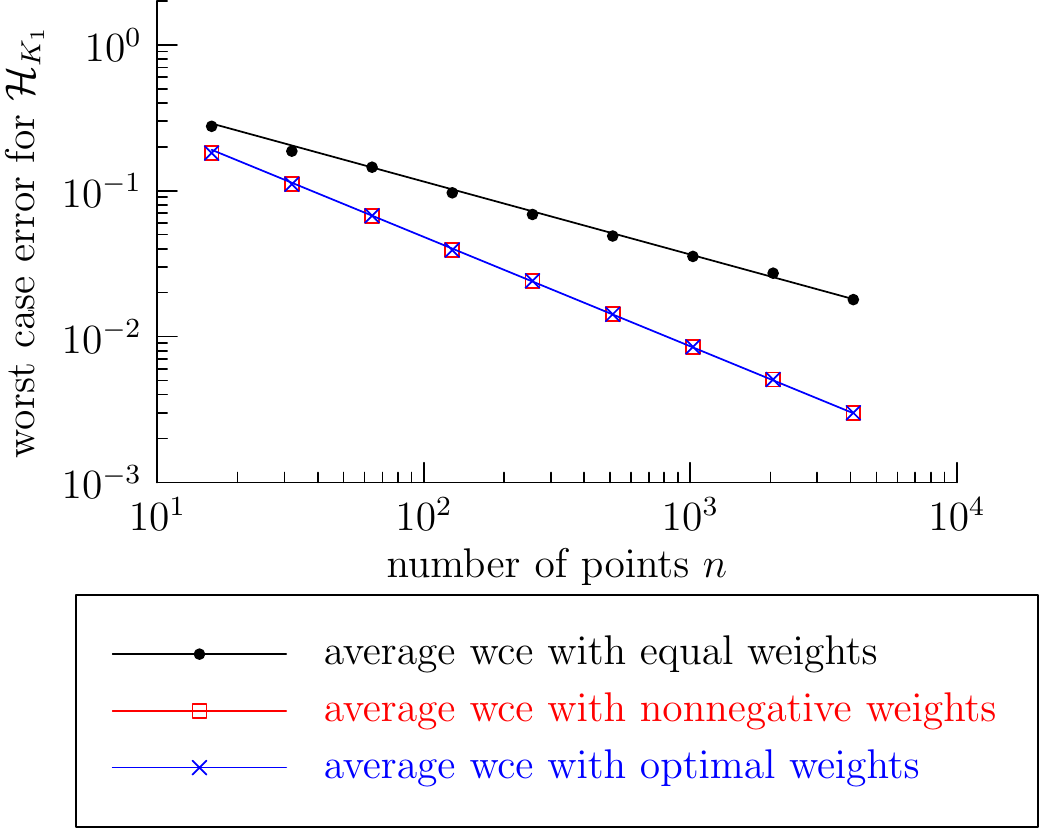}\label{subfig:1}
 }
 \hfill
 \subfigure[Fixed weights $1/n$ for $K_2$ are stuck with decay $-1/2$]{
 \includegraphics[width=.45\textwidth]{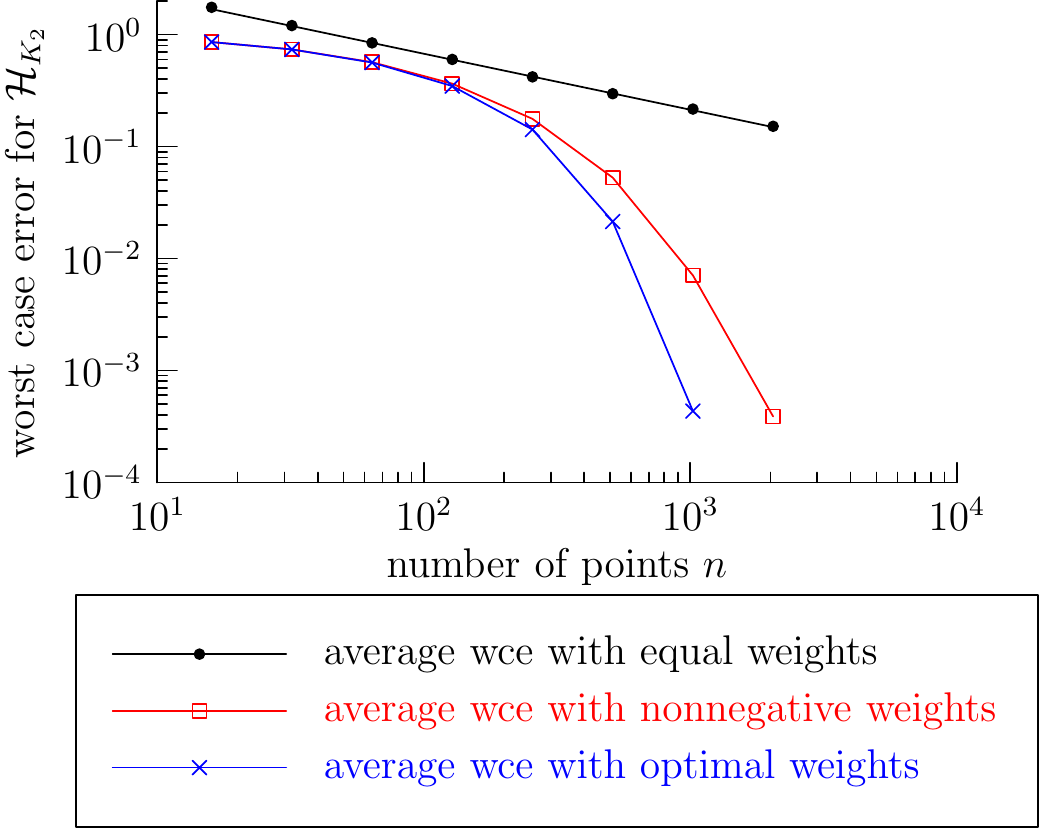}\label{subfig:2}
}
\caption{The worst case integration error for $\mathcal{H}_{K_1}$ and $\mathcal{H}_{K_2}$ averaged over $20$ instances of random points in logarithmic scalings. The lines with exact slope $-1/2$ and $-3/4$ are adjusted to approximate the data points.}\label{fig:1}
\end{figure}

The smooth kernel 
\begin{equation*}    
K_2 (x,y) := 48\exp(-12\|x-y\|^2),\quad x,y\in\s^2,
\end{equation*}
generates a space $\mathcal{H}_{K_2}$ of smooth functions contained in $H^s(\s^2)$, for all $s>0$, and satisfies
\begin{equation*}
\int_{\mathbb S^2} K_2(x,y) \mathrm{d}\mu(y) = 1 - \mathrm \exp(-48),\quad\text{for all } x\in\s^2.
\end{equation*}
 Our numerical experiments in Figure \ref{subfig:2} suggest that the decay rate for the optimized weights is indeed beyond linear. Note that the equal weight case is stuck with the decay rate $-1/2$, although we are now dealing with arbitrarily smooth functions. 

The dimension of the Grassmannian $\mathcal{G}_{2,4}$ is $d=4$, and we consider the two reproducing kernels
\begin{align*}    
K_{3}(x,y) & := \sqrt{(2-\trace(xy))^{3}} + 2 \trace(xy), \\
K_4 (x,y) & := \tfrac{3}{2}\exp(\trace(xy)-2)).
\end{align*}
Note that $K_3$ reproduces $H^{7/2}(\mathcal{G}_{2,4})$ with an equivalent norm, and $\mathcal{H}_{K_4}$ is contained in $H^{s}(\G_{2,4})$, for all $s>0$. The terms \eqref{eq:wce in K} and \eqref{eq:sder} are computable from 
\begin{align*}
\int_{\mathcal G_{2,4}} K_3(x,y) \mathrm{d}\mu(y) & = 2 + \frac{74}{75}\sqrt2-\frac{2}{5}\log(1+\sqrt2),\\
\int_{\mathcal G_{2,4}} K_4(x,y) \mathrm{d}\mu(y) &= \frac{3}{2} \mathrm \exp(-1) \int_0^1 \frac{\mathrm{sinh}(t)}{t}\mathrm{d}t.
\end{align*}
for all $x\in\mathcal{G}_{2,4}$. 

We observe in Figure \ref{subfig:3} that the random points with equal weights yield decay rate $-1/2$ and optimizing weights leads to $-7/8$ matching the optimal rate in \eqref{eq:opt est below}, \eqref{optimality} with $d=4$. 
\begin{figure}
\subfigure[Optimized weights for $K_3$ yield decay $-7/8$]{
  \includegraphics[width=.45\textwidth]{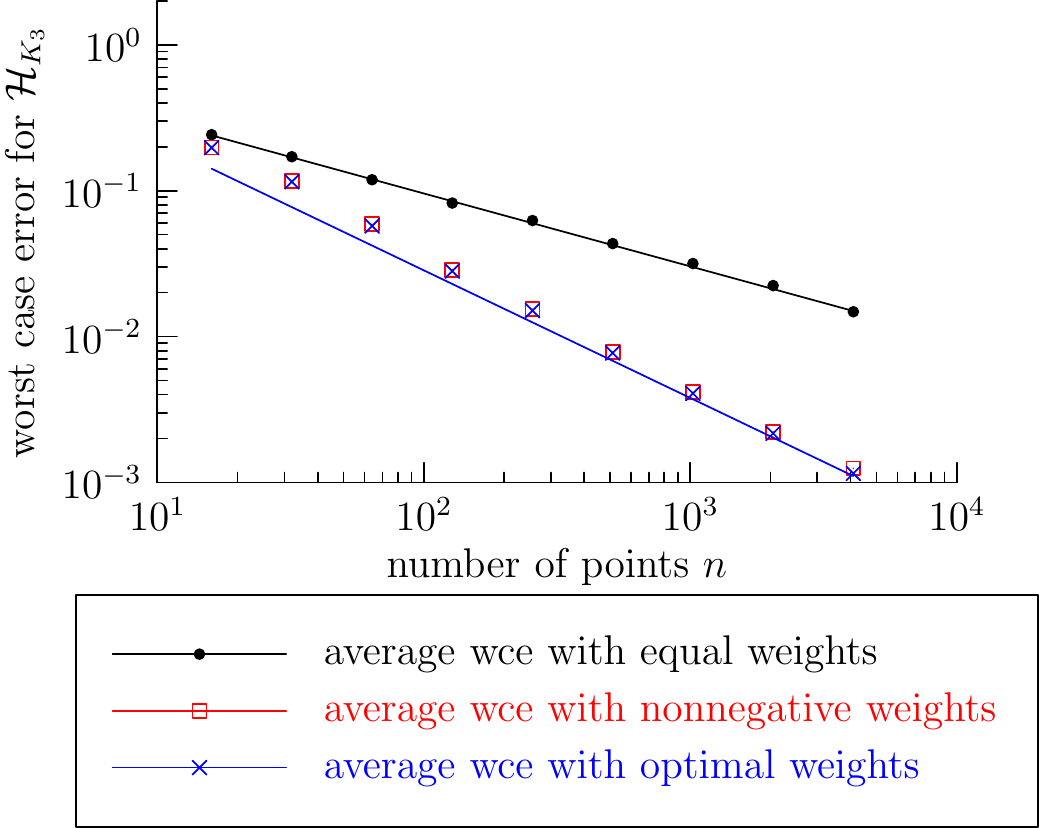}\label{subfig:3}
 }
 \hfill
 \subfigure[Fixed weights $1/n$ for $K_4$ are stuck with decay $-1/2$]{
 \includegraphics[width=.45\textwidth]{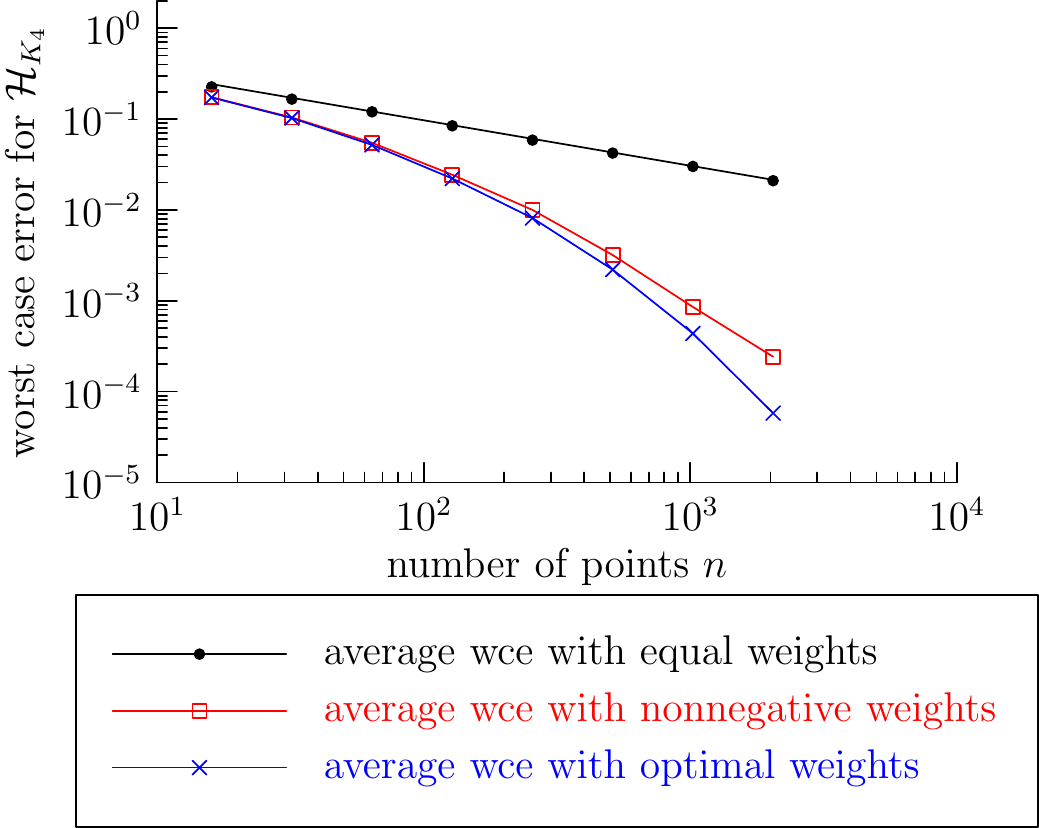}\label{subfig:4}
}
\caption{The worst case integration error for $\mathcal{H}_{K_3}$ and $\mathcal{H}_{K_4}$ averaged over $20$ instances with random points in logarithmic scalings.}\label{fig:2}
\end{figure}
In Figure \ref{subfig:4}, it seems that the worst case error for $\mathcal{H}_{K_4}$  decays faster than linear when optimizing the weights for random points on the Grassmannian $\mathcal{G}_{2,4}$ outperforming the equal weight case with its rate $-1/2$.

\section{Beyond closed manifolds}
We shall make use of the push-forward to transfer our results on the worst case integration error from closed manifolds to more general sets. Suppose $S$ is a topological space and $h:\mathcal{M}\rightarrow S$ is Borel measurable and surjective. We endow $S$ with the push-forward measure $h_*\mu$ defined by $(h_*\mu)(A)=\mu(h^{-1}A)$ for any Borel measurable subset $A\subset S$. By abusing notation, let $\dist_{\M}(A,B):=\inf_{a\in A;\, b\in B} \dist_{\M}(a,b)$ for $A,B\subset \M$, and we put
\begin{equation}\label{m dis}
\dist_{S,h}(x,y):= \dist_{\M}(h^{-1}x,h^{-1}y),\quad x,y\in S.
\end{equation} 
For $s>d/p$,  we define
\begin{equation}\label{eq:space def pp}
H^s_p(S)_h:=\{f:S\rightarrow\R : h^*f \in H^s_p(\mathcal{M})\}
\end{equation}
with $\|f\|_{H^s_p(S)_h}:=\|h^* f\|_{H^s_p(\M)}$, where $h^*f$ denotes the pullback $f\circ h$. This enables us to formulate the analogue of Theorem \ref{th:fundamentals 2}:
\begin{theorem}\label{th:pullbacky}
Given $1\leq p\leq\infty$ with $s>d/p$ and $\{x_j\}_{j=1}^n\subset S$, suppose that the following two conditions are satisfied,
\begin{itemize}
\item[a)] $\# h^{-1}x$ is finite, for all $x\in S$,
\item[b)] $\dist_{\mathcal{M}}(\{a\},h^{-1}x) \asymp \dist_{\M} (h^{-1}h(a),h^{-1}x)$,
for all $a\in\M$, $x\in S$.
\end{itemize}
Then there are nonnegative weights $\{ w_j\}_{j=1}^n$ such that 
\begin{equation*}
\wce(\{(x_j, w_j)\}_{j=1}^{n},H^s_p(S)_h) \lesssim \rho_n^s,
\end{equation*}
where $\rho_n$ denotes the covering radius of $\{x_j\}_{j=1}^n$ taken with respect to \eqref{m dis}.
\end{theorem}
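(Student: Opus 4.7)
The plan is to reduce the statement to Theorem \ref{th:fundamentals 2} on $\M$ by lifting the point set $\{x_j\}_{j=1}^n \subset S$ through $h$ and then transferring the resulting quadrature weights back down to $S$. The natural lift is the full preimage $A := h^{-1}(\{x_j\}_{j=1}^n) \subset \M$, which by hypothesis (a) is a finite set.

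The first step is to show that the covering radius $\rho(A)$ of $A$ in $(\M,\dist_{\M})$ is comparable to $\rho_n$. For arbitrary $a\in\M$,
\begin{equation*}
\dist_{\M}(a, A) = \min_{j} \dist_{\M}(\{a\},h^{-1}x_j) \asymp \min_j \dist_{\M}(h^{-1}h(a), h^{-1}x_j) = \min_j \dist_{S,h}(h(a),x_j),
\end{equation*}
where hypothesis (b) supplies the equivalence. Taking the maximum over $a\in\M$ and using the surjectivity of $h$ to identify $\{h(a) : a\in\M\} = S$ then yields $\rho(A) \asymp \rho_n$.

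The second step applies Theorem \ref{th:fundamentals 2} (with the nonnegativity-preserving refinement from the remark after its proof) to the finite set $A \subset \M$, furnishing nonnegative weights $\{\tilde w_a\}_{a\in A}$ with $\wce(\{(a,\tilde w_a)\}_{a\in A}, H^s_p(\M)) \lesssim \rho(A)^s \asymp \rho_n^s$. I then push these weights forward to $S$ by setting
\begin{equation*}
w_j := \sum_{a\in h^{-1}(x_j)} \tilde w_a \geq 0, \qquad j=1,\ldots,n.
\end{equation*}
For every $f \in H^s_p(S)_h$, the identities $\int_S f\, \mathrm{d}(h_*\mu) = \int_\M h^*f\, \mathrm{d}\mu$ and $\sum_j w_j f(x_j) = \sum_{a\in A} \tilde w_a (h^*f)(a)$ reduce the error functional on $S$ to the corresponding one on $\M$ evaluated at $h^*f$. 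Since $\|f\|_{H^s_p(S)_h} = \|h^*f\|_{H^s_p(\M)}$ by definition of the pullback norm, supping over the unit ball of $H^s_p(S)_h$ amounts to supping over a subset of the unit ball of $H^s_p(\M)$, so
\begin{equation*}
\wce(\{(x_j,w_j)\}_{j=1}^n, H^s_p(S)_h) \leq \wce(\{(a,\tilde w_a)\}_{a\in A}, H^s_p(\M)) \lesssim \rho_n^s.
\end{equation*}

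The conceptually delicate step is the first one: hypothesis (b) is exactly the geometric condition that allows us to pass between the ad hoc distance $\dist_{S,h}$ on $S$ and the intrinsic Riemannian distance on $\M$ in a uniform, constant-free way, and it is what prevents pathological fibers from spoiling the equivalence $\rho(A) \asymp \rho_n$. Everything else (the finiteness of $A$, the aggregation of weights, the pullback of norms) is essentially bookkeeping, but this equivalence of covering radii is the load-bearing ingredient.
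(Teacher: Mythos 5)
Your proof is correct and follows essentially the same route as the paper's: lift the points to the full preimage in $\M$, apply Theorem \ref{th:fundamentals 2} there, aggregate the nonnegative weights over each fiber, and use the pullback identities for the integral and the norm. You additionally spell out the covering-radius equivalence $\rho(A)\asymp\rho_n$ that the paper only asserts follows from the assumptions, which is a welcome bit of extra detail.
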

Note that \eqref{m dis} is a quasi-metric on $S$ if the assumptions in Theorem \ref{th:pullbacky} are satisfied, i.e., the conditions of a metric are satisfied except for the triangular inequality that still holds up to a constant factor. 
\begin{proof}
Denote $\{z_{j,i}\}_{i=1}^{n_j}=h^{-1}x_j$, for $j=1\ldots,n$. According to Theorem \ref{th:fundamentals 2}, there exist nonnegative weights $\{ w_{j,i}\}_{i=1}^{n_j}$, for $j=1\ldots,n$, such that, for all $f\in H^s_p(S)_h$, 
\begin{equation}\label{eq:funda}
\Big|  \int_{\M} (h^*f) (z)\mathrm{d}\mu(z) - \sum_{j=1}^n\sum_{i=1}^{n_j}  w_{j,i}(h^*f) (z_{j,i}) \Big| \lesssim \rho_{\M}(\bigcup_{j=1}^n h^{-1}x_j)^s \|h^* f \|_{H^s_p(\M)},
\end{equation}
where $\rho_{\M}(\bigcup_{j=1}^n h^{-1}x_j)$ denotes the covering radius of $\bigcup_{j=1}^n h^{-1}x_j\subset\M$. The assumptions imply $\rho_n\asymp \rho_{\M}(\bigcup_{j=1}^n h^{-1}x_j)$, so that $ w_j:=\sum_{i=1}^{n_j}  w_{j,i}$, $j=1,\ldots,n$, and \eqref{eq:funda} lead to 
\begin{equation*}
\Big|  \int_{S} f(x) \mathrm{d}(h_*\mu)(x) - \sum_{j=1}^n  w_{j} f(x_j) \Big| \lesssim \rho_{n}^s \|f\|_{H^s_p(S)_h},
\end{equation*}
which concludes the proof.
\end{proof}
\begin{remark}
Since independent random points $\{x_j\}_{j=1}^n$ distributed according to $h_*\mu$ on $S$ with covering radius $\rho_n$ are generated by independent random points $\{z_j\}_{j=1}^n$ with respect to $\mu$ on $\M$ with $x_j=h(z_j)$, for $j=1,\ldots,n$, the observation 
\begin{equation*}
\rho_n\lesssim \rho_{\M}(\{z_j\}_{j=1}^n)
\end{equation*}
implies that also Corollary \ref{th:only one}, and Corollary \ref{co 22} hold for $H^s_p(S)_h$ and $h_*\mu$. 
\end{remark}

The impact of Theorem \ref{th:pullbacky} depends on whether or not the choices of $h$ yield reasonable function spaces $H^s_p(S)_h$, distances $\dist_{S,h}$, and measures $h_*\mu$. For instance, if $h$ is also injective with measurable $h^{-1}$, then $H^s(S)_h$ is the reproducing kernel Hilbert space with kernel 
\begin{equation*}
\sum_{\ell=0}^\infty (1+\lambda_\ell)^{-s} \psi_\ell(x)\psi_\ell(y),
\end{equation*}
where $\psi_\ell:=\varphi_\ell \circ h^{-1}$, so that $\{\psi_\ell\}_{\ell=0}^\infty$ is an orthonormal basis for the square integrable functions with respect to $h_*\mu$. In the following, we shall discuss a few special cases, in which $h$ is not injective. By using the results in \cite{Xu:1998rt} and \cite{Xu:2001fp}, we shall determine $H^s(S)_h$ for $S$ being the unit ball $\mathbb{B}^d:=\{x\in\R^d : \|x\|\leq 1\}$, the cube $[-1,1]^d$, and the simplex $\Sigma^d:=\{x\in\R^d: x_1,\ldots,x_d\geq 0;\; \sum_{i=1}^dx_i\leq 1\}$. 

Let $h:\s^d\rightarrow \mathbb{B}^d$ be the projection onto the first $d$ coordinates, i.e., $h(x)=(x_1,\ldots,x_d)\in \mathbb{B}^d$. The push-forward measure $h_*\mu_{\s^d}$ on $\mathbb{B}^d$ is given by 
\begin{equation}\label{eq:d ddd d}
\frac{\Gamma(d/2+1/2)}{\pi^{d/2+1/2}}\frac{{\rm d} x}{\sqrt{1-\|x\|^2}},
\end{equation}
and the assumptions in Theorem \ref{th:pullbacky} are satisfied. Let $\{T_{k,\ell}: \ell=0,1,2,\ldots; \;k=1,\ldots,r^d_\ell\}$ with $r^d_\ell:=\binom{\ell+d-1}{\ell}$ be orthonormal polynomials with respect to the measure \eqref{eq:d ddd d}, and each $T_{k,\ell}$ has total degree $\ell$. For $d=1$, this corresponds to Chebyshev polynomials. The case $d=2$ relates to generalized Zernike polynomials, cf.~\cite{Wunsche:2005ly}.

\begin{proposition}[The unit ball]
For $s>d/2$ and $h:\s^d \rightarrow \mathbb{B}^d$ as above, the space $H^s(\mathbb{B}^d)_h$ is reproduced by the kernel
\begin{equation}\label{eq:kernel balls et}
K^s_{\mathbb{B}^d}(x,y):=\sum_{\ell=0}^\infty (1+\ell(\ell+d-1))^{-s} \sum_{k=1}^{r^d_\ell}T_{k,\ell}(x)T_{k,\ell}(y),\quad x,y\in \mathbb{B}^d.
\end{equation}
\end{proposition}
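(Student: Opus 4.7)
The plan is to identify the pullback $h^*$ as a unitary map from $L_2(\mathbb{B}^d, h_*\mu)$ onto the subspace $E \subset L_2(\s^d, \mu)$ of functions invariant under the reflection $\sigma(u_1,\ldots,u_d,u_{d+1}) := (u_1,\ldots,u_d,-u_{d+1})$, and then to read off the reproducing kernel from the spherical-harmonic expansion on $\s^d$.

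First I would verify that $h^*$ is an isometry onto $E$: since $h^*g$ depends only on the first $d$ coordinates it automatically lies in $E$, and a change-of-variable using the polar parametrisation $u_{d+1} = \pm\sqrt{1-\|x\|^2}$ turns the normalised surface measure on $\s^d$ into the measure \eqref{eq:d ddd d} on $\mathbb{B}^d$, with the factor of two from the two hemispheres absorbed into the normalisation constant. Next I would invoke \cite{Xu:1998rt, Xu:2001fp} to realise each $\sigma$-invariant spherical harmonic of degree $\ell$ as a polynomial of total degree $\ell$ on $\mathbb{B}^d$ (via the substitution $u_{d+1}^{2}=1-\|x\|^{2}$), to note that these harmonics span a space of dimension $r^d_\ell$, and to conclude that after orthonormalisation they coincide with $\{T_{k,\ell}\}_{k=1}^{r^d_\ell}$; equivalently, $Y_{k,\ell} := h^*T_{k,\ell}$ forms an orthonormal basis of $E$ consisting of Laplace--Beltrami eigenfunctions with eigenvalues $-\ell(\ell+d-1)$.

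With this in hand, for $f \in L_2(\mathbb{B}^d, h_*\mu)$ with expansion $f = \sum_{\ell,k} c_{k,\ell} T_{k,\ell}$, the isometry yields $h^*f = \sum_{\ell,k} c_{k,\ell} Y_{k,\ell}$. Combining with \eqref{eq:def norm sob} on $\s^d$ and Parseval gives
\begin{equation*}
\|f\|_{H^s(\mathbb{B}^d)_h}^{2} = \|h^*f\|_{H^s(\s^d)}^{2} = \sum_{\ell,k}(1+\ell(\ell+d-1))^{s}|c_{k,\ell}|^{2},
\end{equation*}
so $H^s(\mathbb{B}^d)_h$ is a Hilbert space with inner product $\langle f,g\rangle_{H^s} = \sum_{\ell,k}(1+\ell(\ell+d-1))^{s} c_{k,\ell}(f)\, c_{k,\ell}(g)$. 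A direct calculation, using $s > d/2$ and the Sobolev embedding of $H^s(\mathbb{B}^d)_h$ into $C(\mathbb{B}^d)$ to justify the exchange of summation and point evaluation, then shows that $K^s_{\mathbb{B}^d}(x,\cdot)$ belongs to $H^s(\mathbb{B}^d)_h$ and satisfies $\langle f,K^s_{\mathbb{B}^d}(x,\cdot)\rangle_{H^s} = f(x)$.

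The main obstacle is the explicit correspondence between the orthonormal polynomials $T_{k,\ell}$ on $\mathbb{B}^d$ and the $\sigma$-invariant spherical harmonics of degree $\ell$ on $\s^d$, together with the dimension count $r^d_\ell$. This is precisely what is established in \cite{Xu:1998rt, Xu:2001fp}, so those results can be invoked rather than reproved; pushing the $\s^d$ reproducing-kernel formula through the isometry $h^*$ then delivers $K^s_{\mathbb{B}^d}$ essentially for free.
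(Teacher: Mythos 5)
Your proposal is correct and follows essentially the same route as the paper: both identify $H^s(\mathbb{B}^d)_h$ with the reflection-symmetric subspace $H^s(\s^d)^{\sym}$, invoke the results of Xu to recognise $h^*T_{k,\ell}$ as an orthonormal basis of symmetric spherical harmonics of degree $\ell$ (eigenvalue $-\ell(\ell+d-1)$), and then read off the kernel from the Bessel-kernel expansion restricted to that subspace. Your extra step of verifying the reproducing property directly via Parseval is a harmless elaboration of what the paper leaves implicit.
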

For related results on approximation on $\mathbb{B}^d$, we refer to \cite{Petrushev:2008lq} and references therein. 
\begin{proof}
For $\ell=0,1,2,\ldots$, the eigenfunctions of the Laplace-Beltrami operator on the sphere associated to the eigenvalue $-\lambda_\ell=-\ell(\ell+d-1)$ are the \emph{spherical harmonics of order $\ell$}, given by the homogeneous harmonic polynomials in $d+1$ variables of exact total degree $\ell$ restricted to $\s^d$. Each eigenspace $E_\ell$ associated to $\lambda_\ell$ splits orthogonally into $E_\ell = E_\ell^{(1)} \oplus E_\ell^{(2)}$, where 
\begin{align}\label{eq:split eigenspaces}
\begin{split}
E_\ell^{(1)} &:= \{f\in E_\ell : f(x) = f(h(x),-x_{d+1}),\;\forall x\in\s^d\},\\
E_\ell^{(2)} &:= \{f\in E_\ell : f(x) = -f(h(x),-x_{d+1}),\;\forall x\in\s^d\},\quad \ell=0,1,2,\ldots.
\end{split}
\end{align}
We deduce from \cite[Theorem 3.3, Example 3.4]{Xu:1998rt} that the functions 
\begin{equation*}
Z^{(1)}_{k,\ell}(z) :=\|z\|^{\ell}(h^*T_{k,\ell})(\tfrac{z}{\|z\|}),\quad z\in\R^{d+1}
\end{equation*}
are homogeneous polynomials of total degree $\ell$, and their restrictions $Y^{(1)}_{k,\ell}:=Z^{(1)}_{k,\ell}|_{\s^d}$, for $k=1,\ldots,r^d_\ell$, are an orthonormal basis for $E_\ell^{(1)}$. Note that $f\in H^s(\mathbb{B}^d)_h$ if and only if $f\circ h$ is contained in 
\begin{align}
H^s(\s^d)^{\sym}&:=  \{f\in H^s(\s^d) : f|_{h^{-1}x} \text{ is constant } \forall x\in \mathbb{B}^d\}\\
& = \{f\in H^s(\s^d) : f(x)=f(h(x),-x_{d+1}), \;x\in \s^d\}. \label{eq:analog}
\end{align}
According to \eqref{eq:Bessel def} and due to the decomposition induced by \eqref{eq:split eigenspaces} and using $Y^{(1)}_{k,\ell}=h^* T_{k,\ell}$, the reproducing kernel of $H^s(\s^d)^{\sym}$ is 
\begin{align}\label{eq:K KK}
\begin{split}
K^{s}_{\s^d,\sym}(x,y) & =\sum_{\ell=0}^\infty  (1+\lambda_\ell)^{-s} \sum_{k=1}^{r^d_\ell}Y^{(1)}_{k,\ell}(x)Y^{(1)}_{k,\ell}(y)\\
& = \sum_{\ell=0}^\infty (1+\lambda_\ell)^{-s} \sum_{k=1}^{r^d_\ell}(h^*T_{k,\ell})(x) (h^*T_{k,\ell})(y),\quad x,y\in\s^d.
\end{split}
\end{align}
Thus, $H^s(\mathbb{B}^d)_h$ is indeed reproduced by \eqref{eq:kernel balls et}.
\end{proof}
\begin{example}[$\mathbb{B}^1$]
For $d=1$, the even and odd spherical harmonics, 
\begin{align}\label{eq:align 1}
\begin{split}
Y^{(1)}_{\ell}(\cos(\alpha),\sin(\alpha)) & = \sqrt{2}\cos(\ell \alpha),\\
Y^{(2)}_{\ell}(\cos(\alpha),\sin(\alpha)) & = \sqrt{2}\sin(\ell \alpha),\quad \alpha\in[0,2\pi],
\end{split}
\end{align}
with $\ell=1,2,3,\ldots$, and $Y^{(1)}_0=1$, form orthonormal bases for the respective spaces $E_\ell^{(1)}$ and $E_\ell^{(2)}$ in \eqref{eq:split eigenspaces}. We observe 
$ \dist_{[-1,1],h}(x,y):=|\arccos(x)-\arccos(y)|$, $x,y\in [-1,1]$, and recognize the Chebyshev measure in \eqref{eq:d ddd d}. The Chebyshev polynomials $T_\ell$ of the first kind, scaled by the factor $\sqrt{2}$ for $\ell=1,2,3,\ldots$, indeed satisfy the characteristic identities $T_\ell(\cos(\alpha)) = \sqrt{2}\cos(\ell \alpha)$ for $\alpha\in[0,2\pi]$, $\ell=1,2,3,\ldots$, and $T_0=1$. 

To simplify numerical experiments, we observe that the kernel 
\begin{align*}
K_5(x,y) &:= 2 - \sqrt{1-xy+|x-y|}\\
&=  2 + \frac{4}{\pi} \sum_{\ell=0}^\infty \frac{1}{4\ell^2-1} T_\ell(x)T_\ell(y)
\intertext{reproduces $H^1(\mathbb{B}^1)_h$ with an equivalent norm, and, for fixed $0<r<1$, the smooth kernel }
K_6(r;x,y) &:= \frac{(1-r^2)(1-2rxy+r^2)}{1+r^4 - 4xy(r+r^3)+ r^2(4x^2+4y^2-2)} \\
&= \frac{1}{2}+\frac{1}{2}\sum_{\ell=0}^\infty r^\ell T_\ell(x)T_\ell(y)
\end{align*}
reproduces a function space that is continuously embedded into $H^s(\mathbb{B}^1)_h$ for all $s>1/2$. As in our previous examples, our numerical experiments in Figure \ref{fig:222b} are in accordance with the theoretical results. 

\begin{figure}
\subfigure[Optimized weights for $K_5$ yield decay $-1$]{
  \includegraphics[width=.45\textwidth]{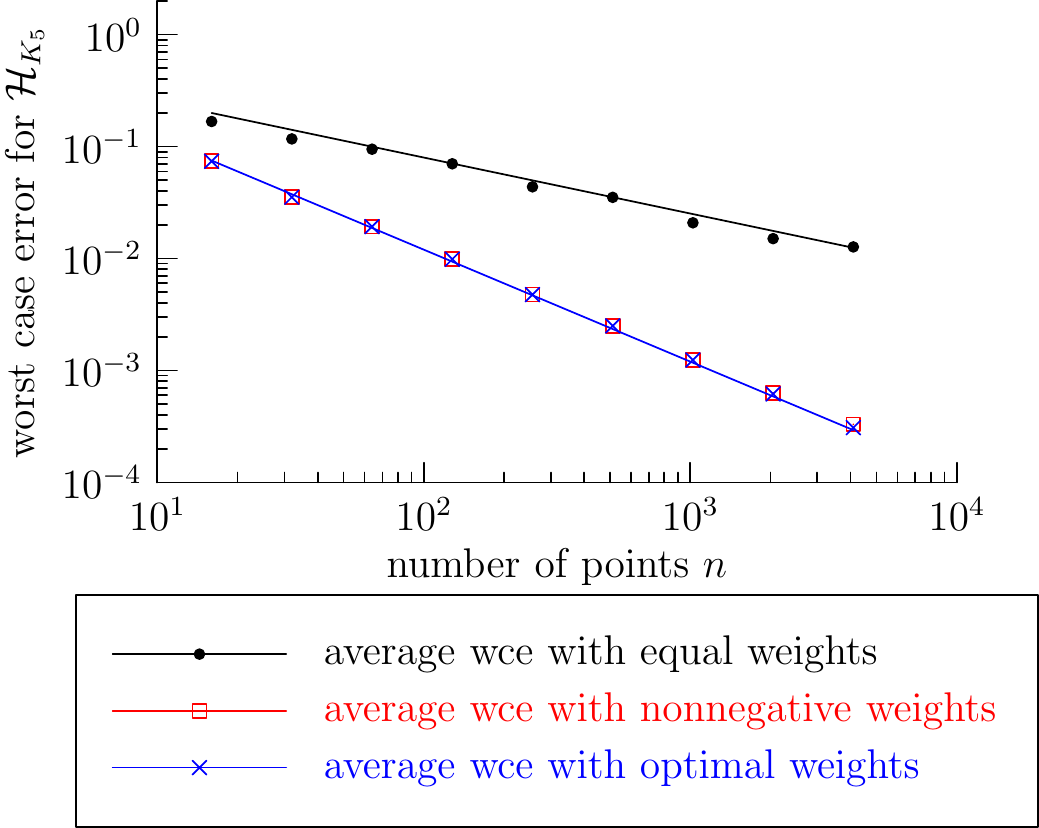}\label{subfig:3v}
 }
 \hfill
 \subfigure[Fixed weights $1/n$ for $K_6$ are stuck with decay $-1/2$]{
 \includegraphics[width=.45\textwidth]{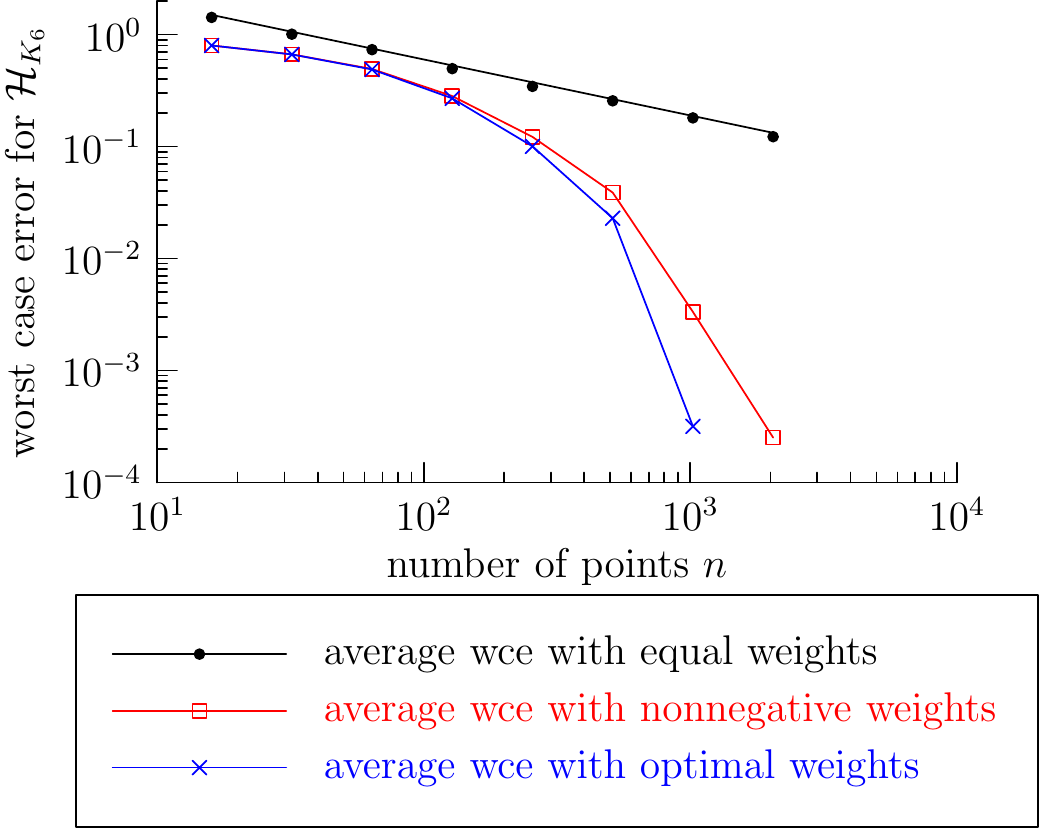}\label{subfig:4v}
}
\caption{$\wce$ for $\mathcal{H}_{K_5}$ and $\mathcal{H}_{K_6}$ with $r=0.97$ and random points in logarithmic scalings.}\label{fig:222b}
\end{figure}
\end{example}

\begin{example}[$\mathbb{B}^2$]
For $r\geq 1$, we define the family of kernels 
\begin{equation*}
L_r(x,y) := 3 - \frac{3}{2\sqrt{2r+2}}\sqrt{r-\langle x,y \rangle +\sqrt{\big(r-\langle x,y\rangle \big)^2-(1-\|x\|^2)(1-\|y\|^2)}}.
\end{equation*}
Those kernels are positive definite on $\mathbb{B}^2$ and satisfy
\begin{equation*}
\int_{\mathbb{B}^2} L_r(x,y)\frac{{\rm d}x}{2\pi \sqrt{1-\|x\|^2}}
 = \frac{3+r+2\sqrt{r^2-1}}{1+r+\sqrt{r^2-1}}.
\end{equation*}
Note that $L_1$ reproduces $H^{3/2}(\mathbb{B}^2)$ with an equivalent norm and $L_r$ for $r>1$ reproduces a space that is continuously embedded into each $H^s(\mathbb{B}^2)$ for $s>1$. 
In our numerical experiments, we set
\begin{equation*}
K_7(x,y) := L_1(x,y),\qquad K_8(x,y) := L_{51/50}(x,y),
\end{equation*}
and Figure \ref{fig:222cd} supports our theoretical results. There, however, the worst case error for nonnegative weights does not show superlinear decay for smooth functions in Figure \ref{subfig:4v}, but we speculate that this is due to a numerical artifact of the very last data point. 
\begin{figure}
\subfigure[Optimized weights for $K_7$ yield decay $-3/4$]{
  \includegraphics[width=.45\textwidth]{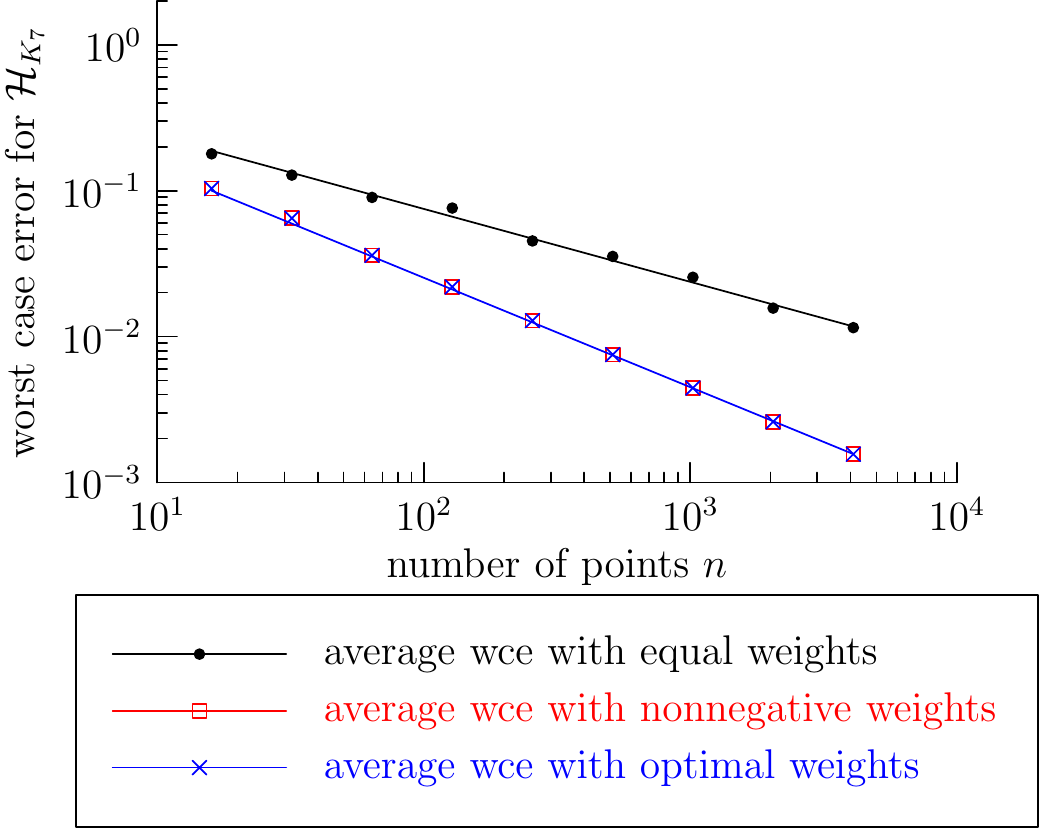}\label{subfig:3v}
 }
 \hfill
 \subfigure[Fixed weights $1/n$ for $K_8$ are stuck with decay $-1/2$]{
 \includegraphics[width=.45\textwidth]{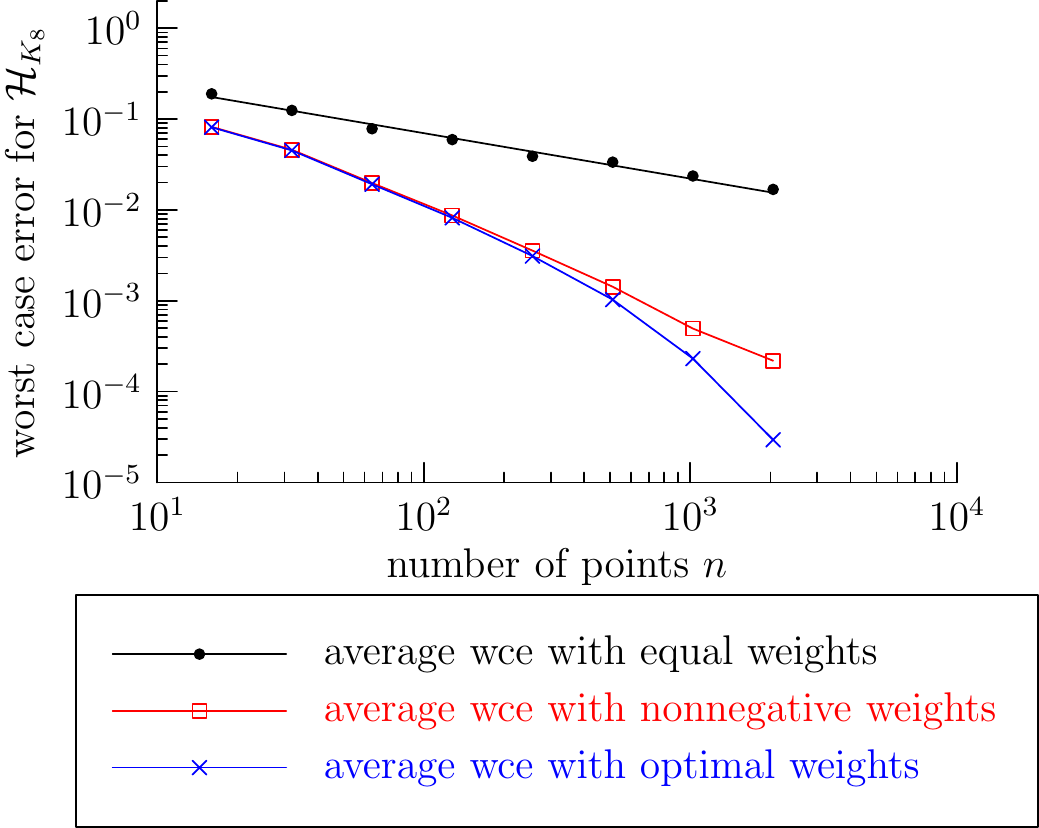}\label{subfig:4v}
}
\caption{$\wce$ for $\mathcal{H}_{K_7}$ and $\mathcal{H}_{K_8}$ for random points in logarithmic scalings.}\label{fig:222cd}
\end{figure}

\end{example}

The $d$-dimensional torus $\mathbb{T}^d:=\s^1\times \ldots\times\s^1$ leads to $h:\mathbb{T}^d \rightarrow [-1,1]^d$ defined by $h(x_1,\ldots,x_d)=\big(x_{1,1}, \ldots,x_{d,1}\big)$, where $x_i=(x_{i,1},x_{i,2})^\top\in\s^1$. The push-forward of the Riemannian measure on $\mathbb{T}^d$ under $h$ is 
\begin{equation}\label{eq:push torus}
\frac{{\rm d} x_1 \cdots {\rm d}x_d}{\pi^d\sqrt{(1-x^2_1)\cdots (1-x^2_d)}}.
\end{equation}
A suitable basis of orthonormal polynomials characterizes $H^s([-1,1]^d)_h$:
\begin{proposition}[The cube]
For $s>d/2$ and $h:\mathbb{T}^d \rightarrow [-1,1]^d$ as above, the space $H^s([-1,1]^d)_h$ is reproduced by 
\begin{equation}\label{eq:sswde 00}
K^s_{[-1,1]^d}(x,y):=\sum_{\ell\in\mathbb{N}^d} (1+\|\ell\|^2)^{-s} T_\ell(x) T_\ell(y),\quad x,y\in[-1,1]^d,
\end{equation}
where 
$ 
T_\ell(x):=T_{\ell_1}(x_1)\cdots T_{\ell_d}(x_d)$, $x\in [-1,1]^d$, $\ell\in\mathbb{N}^d$. 
\end{proposition}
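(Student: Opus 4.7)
The plan is to follow the blueprint of the ball proposition almost verbatim, with the symmetry group enlarged from one to $d$ commuting reflections. By the definition of the pullback space, $h^*$ gives an isometric isomorphism of $H^s([-1,1]^d)_h$ onto the closed subspace
\begin{equation*}
H^s(\mathbb{T}^d)^{\sym}:=\{g\in H^s(\mathbb{T}^d)\,:\,g\circ R_i=g\text{ for } i=1,\ldots,d\},
\end{equation*}
where $R_i$ is the reflection $\theta_i\mapsto -\theta_i$ on $\mathbb{T}^d$. This is immediate from the fact that the fibers of $h$ are precisely the orbits of the group generated by $R_1,\ldots,R_d$, and any function pulled back from $[-1,1]^d$ is automatically invariant under this group. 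The conditions of Theorem \ref{th:pullbacky} are readily verified because these reflections are isometries, and a direct change of variables shows that the push-forward $h_*\mu$ is the product Chebyshev measure \eqref{eq:push torus}.

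Next, I would use the tensor-product structure of $\mathbb{T}^d$. The Laplace--Beltrami operator on $\mathbb{T}^d$ is separable, and a full orthonormal eigenbasis of $L_2(\mathbb{T}^d)$ is obtained by taking tensor products of the one-dimensional trigonometric basis on $\s^1$ (cf.\ \eqref{eq:align 1}), with the tensor product indexed by $(\ell,\varepsilon)\in\mathbb{N}^d\times\{1,2\}^d$ carrying eigenvalue $-\|\ell\|^2$. Projecting onto $L_2(\mathbb{T}^d)^{\sym}$ keeps exactly those basis elements whose one-dimensional factors are cosines, namely
\begin{equation*}
\psi_\ell(\theta):=\prod_{i=1}^d\phi_{\ell_i}(\theta_i),\quad\ell\in\mathbb{N}^d,
\end{equation*}
with $\phi_0\equiv 1$ and $\phi_\ell(\theta)=\sqrt{2}\cos(\ell\theta)$ for $\ell\geq 1$. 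The Chebyshev identity together with the normalization of $T_\ell$ fixed in the $\mathbb{B}^1$ example yields $\psi_\ell=h^*T_\ell$, so $\{T_\ell\}_{\ell\in\mathbb{N}^d}$ is orthonormal in $L_2([-1,1]^d,h_*\mu)$.

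Finally, analogously to \eqref{eq:K KK}, restricting \eqref{eq:Bessel def} to the symmetric subspace gives
\begin{equation*}
K^s_{\mathbb{T}^d,\sym}(\theta,\theta')=\sum_{\ell\in\mathbb{N}^d}(1+\|\ell\|^2)^{-s}\psi_\ell(\theta)\psi_\ell(\theta')=\sum_{\ell\in\mathbb{N}^d}(1+\|\ell\|^2)^{-s}(h^*T_\ell)(\theta)(h^*T_\ell)(\theta'),
\end{equation*}
as the reproducing kernel of $H^s(\mathbb{T}^d)^{\sym}$. Descending along $h$ through the isometric identification from the first step, this becomes precisely \eqref{eq:sswde 00}, proving the claim.

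The main (and only genuine) obstacle is the bookkeeping step that the reproducing kernel of the closed invariant subspace is obtained by summing \eqref{eq:Bessel def} only over the symmetric basis $\{\psi_\ell\}$; this follows because orthogonal projection onto a closed subspace is the reproducing kernel operation for the subspace, and each $\psi_\ell$ is already invariant while every antisymmetric basis element is annihilated. All other steps are routine parallels of the ball case.
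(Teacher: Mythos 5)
Your argument is correct and follows essentially the same route as the paper's proof: both identify $H^s([-1,1]^d)_h$ with the fiber-wise constant (equivalently, reflection-invariant) subspace $H^s(\mathbb{T}^d)^{\sym}$, tensorize the one-dimensional even/odd eigenspace splitting \eqref{eq:split eigenspaces} to retain exactly the cosine products with eigenvalue $-\|\ell\|^2$, observe that these are the pullbacks $h^*T_\ell$ of the orthonormal Chebyshev products, and restrict the Bessel kernel \eqref{eq:Bessel def} to this invariant orthonormal basis. The only cosmetic difference is that you phrase the symmetry via the group generated by the reflections $\theta_i\mapsto-\theta_i$ rather than via constancy on the fibers of $h$, which amounts to the same condition.
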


\begin{proof}
The polynomials $\{T_\ell : \ell\in\mathbb{N}^d\}$ are orthonormal with respect to \eqref{eq:push torus}. 
By using the eigenspace decomposition \eqref{eq:split eigenspaces} for $\s^1$, we deduce that the space
\begin{equation*}
H^s(\mathbb{T}^d)^{\sym} := \{f\in H^s(\mathbb{T}^d) : f|_{h^{-1}x} \text{ is constant } \forall x\in [-1,1]^d\}
\end{equation*}
is reproduced by the kernel
\begin{equation*}
K^s_{\mathbb{T}^d,\sym}(x,y)=\sum_{\ell\in\mathbb{N}^d} (1+\|\ell\|^2)^{-s}  Y^{(1)}_{\ell}(x)Y^{(1)}_{\ell}(y), \quad x,y\in\mathbb{T}^d,
\end{equation*}
with $Y^{(1)}_\ell(x):=Y^{(1)}_{\ell_1}(x_1)\cdots Y^{(1)}_{\ell_d}(x_d)$ and $Y^{(1)}_{\ell_i}$ are as in \eqref{eq:align 1}. Observing $Y^{(1)}_\ell = h^* T_\ell$ concludes the proof. 
\end{proof}

By following \cite{Xu:2001fp}, we derive an analogous construction for the simplex. Define $h:\s^d\rightarrow \Sigma^d$ by $h(x):=(x_1^2,\ldots,x_d^2)$ and observe that the assumptions in Theorem \ref{th:pullbacky} are satisfied. The push-forward measure $h_*\mu_{\s^d}$ on $\Sigma^d$ is given by 
\begin{equation}\label{eq:pfwrd}
\frac{\Gamma(d/2+1/2)}{\pi^{d/2+1/2}}\frac{{\rm d} u}{\sqrt{u_1\cdots u_d(1-\sum_{i=1}^d u_i)}}.
\end{equation}
Let $\{R_{k,\ell}:\ell=0,1,2,\ldots;\; k=1,\ldots,r^d_\ell\}$ be a system of orthonormal polynomials with respect to \eqref{eq:pfwrd} on $\Sigma^d$, so that each $R_{k,\ell}$ has total degree $\ell$. 
\begin{proposition}[The simplex]
For $s>d/2$ and $h:\mathbb{T}^d \rightarrow \Sigma^d$ as above, the space $H^s(\Sigma^d)_h$ is reproduced by 
\begin{equation}\label{eq:sswde}
K^s_{\Sigma^d}(u,v):=\sum_{\ell=0}^\infty (1+2\ell(2\ell+d-1))^{-s} \sum_{k=1}^{r^d_\ell} R_{k,\ell}(u)R_{k,\ell}(v),\quad u,v\in\Sigma^d.
\end{equation}
\end{proposition}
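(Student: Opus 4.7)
The plan is to mirror the proof of the unit-ball proposition as closely as possible. As a first step, I would identify $H^s(\Sigma^d)_h$ with a symmetric subspace of $H^s(\s^d)$. Since $h(x)=(x_1^2,\dots,x_d^2)$ and $x_{d+1}^2=1-\sum_{i=1}^d x_i^2$ on $\s^d$, each fiber $h^{-1}(u)$ consists of the $2^{d+1}$ points $(\epsilon_1\sqrt{u_1},\dots,\epsilon_d\sqrt{u_d},\epsilon_{d+1}\sqrt{1-\sum_i u_i})$ with $\epsilon\in\{\pm 1\}^{d+1}$. Hence $f\in H^s(\Sigma^d)_h$ if and only if $h^*f$ lies in
\begin{equation*}
H^s(\s^d)^{\sym}:=\{g\in H^s(\s^d): g(\epsilon_1 x_1,\dots,\epsilon_{d+1}x_{d+1})=g(x)\text{ for all }\epsilon\in\{\pm 1\}^{d+1}\},
\end{equation*}
so it suffices to compute the reproducing kernel of $H^s(\s^d)^{\sym}$ and transport it to $\Sigma^d$ via $h$.

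Second, I would decompose $H^s(\s^d)^{\sym}$ along the Laplace--Beltrami eigenspaces $E_n$ of spherical harmonics of degree $n$. Each $E_n$ is preserved by the sign-flip action and splits orthogonally into a symmetric piece $E_n^{\sym}$ and its complement. A polynomial fixed by every sign flip has only monomials with all even exponents, hence even total degree; so $E_n^{\sym}=\{0\}$ for odd $n$, while for $n=2\ell$ the eigenvalue is $\lambda_{2\ell}=2\ell(2\ell+d-1)$, which matches the factor appearing in \eqref{eq:sswde}. Each element of $E_{2\ell}^{\sym}$ is a harmonic homogeneous polynomial in $x_1^2,\dots,x_{d+1}^2$, and after substituting $x_{d+1}^2=1-\sum_{i=1}^d x_i^2$ it descends to a polynomial of total degree $\ell$ on $\Sigma^d$.

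Third, I would invoke the simplex construction of \cite{Xu:2001fp} to obtain an orthonormal basis of $E_{2\ell}^{\sym}$ of the form $\{h^*R_{k,\ell}\}_{k=1}^{r_\ell^d}$, where $\{R_{k,\ell}\}$ is the orthonormal polynomial basis on $\Sigma^d$ with respect to \eqref{eq:pfwrd}. Orthonormality transfers between $\s^d$ and $\Sigma^d$ because $h_*\mu_{\s^d}$ equals the measure in \eqref{eq:pfwrd}. Substituting this basis into the Bessel-kernel formula \eqref{eq:Bessel def}, restricted to symmetric indices, yields
\begin{equation*}
K^s_{\s^d,\sym}(x,y)=\sum_{\ell=0}^\infty(1+2\ell(2\ell+d-1))^{-s}\sum_{k=1}^{r_\ell^d}(h^*R_{k,\ell})(x)(h^*R_{k,\ell})(y)=K^s_{\Sigma^d}(h(x),h(y)),
\end{equation*}
and the reproducing property then transfers to $H^s(\Sigma^d)_h$ by pullback.

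The main obstacle is the identification $E_{2\ell}^{\sym}\cong\spann\{h^*R_{k,\ell}\}_k$: one has to verify that every sign-invariant spherical harmonic of degree $2\ell$ really is the pullback of a polynomial of degree $\ell$ on $\Sigma^d$ and, conversely, that every such pullback descends from a harmonic polynomial of the stated degree. In the ball case this bijection was supplied by \cite[Theorem 3.3]{Xu:1998rt}; here the analogous correspondence is exactly the content of \cite{Xu:2001fp}, coupled with a dimension count matching $\dim E_{2\ell}^{\sym}$ with $r_\ell^d=\binom{\ell+d-1}{\ell}$. Once this identification is in place, the remainder is a direct application of the Bessel-kernel definition \eqref{eq:Bessel def}.
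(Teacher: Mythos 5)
Your proposal is correct and follows essentially the same route as the paper: identify $H^s(\Sigma^d)_h$ with the sign-flip-invariant subspace $H^s(\s^d)^{\sym}$, use the construction of \cite{Xu:2001fp} to realize $\{h^*R_{k,\ell}\}_{k=1}^{r_\ell^d}$ as an orthonormal system of spherical harmonics of order $2\ell$ (so only even degrees contribute and the eigenvalue $2\ell(2\ell+d-1)$ appears), and substitute into the Bessel kernel \eqref{eq:Bessel def}. The paper's proof is terser, deferring the eigenspace-parity and dimension-count details you spell out entirely to \cite{Xu:2001fp}, but the argument is the same.
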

\begin{proof}
Let us define
\begin{equation*}
Z_{k,2\ell}(z) :=\|z\|^{2\ell}R_{k,\ell}(\tfrac{z_1^2}{\|z\|^2},\ldots,\tfrac{z_d^2}{\|z\|^2}),\quad z\in\R^{d+1}.
\end{equation*}
Note that the restrictions $Y_{k,2\ell}:=Z_{k,2\ell}|_{\s^d}$ satisfy $Y_{k,2\ell}=h^* R_{k,\ell}$. We deduce from \cite{Xu:2001fp} that the collection $\{Y_{k,2\ell}:k=1,\ldots,r^d_\ell\}$ is an orthonormal system of spherical harmonics of order $2\ell$ and that the space 
\begin{equation*}
H^s(\s^d)^{\sym} := \{f\in H^s(\s^d) : f|_{h^{-1}x} \text{ is constant } \forall x\in \Sigma^d\}
\end{equation*}
is reproduced by the kernel
\begin{align*}
K(x,y)&:=\sum_{\ell=0}^\infty (1+2\ell(2\ell+d-1))^{-s} \sum_{k=1}^{r^d_\ell} Y_{k,2\ell}(x)Y_{k,2\ell}(y)\\
& = \sum_{\ell=0}^\infty (1+2\ell(2\ell+d-1))^{-s} \sum_{k=1}^{r^d_\ell} (h^*R_{k,\ell})(x) (h^*R_{k,\ell})(y),
\end{align*}
which concludes the proof.
\end{proof}

\begin{remark}
Our Theorem \ref{th:pullbacky} is an elementary way to transfer results from closed manifolds to more general setting. Our treatment of the unit ball, the cube, and the simplex were based on this transfer. The proof of the underlying Theorem \ref{th:fundamentals 2} is based on results in \cite{Filbir:2010aa}, and we restricted attention to closed manifolds although the setting in \cite{Filbir:2010aa} is more general. Alternatively, we could have stated our Theorem \ref{th:fundamentals 2} in more generality and then attempted to check that the technical requirements in \cite{Filbir:2010aa} hold. For instance, technical requirements for $[-1,1]$ were checked in \cite{Coulhon:2012sf}, and the recent work \cite{Kerkyacharian:2018ft} covers technical details for the unit ball and the simplex. 
\end{remark}

\section{Perspectives}
Re-weighting techniques for statistical and numerical integration have attracted attention in different disciplines. Partially complementing findings in \cite{Briol:2016dz,Oettershagen:2017}, 
we have here established that re-weighting random points can yield almost optimal approximation rates of the worst case integration error for isotropic Sobolev spaces on closed Riemannian manifolds. Our results suggest several directions for future work, for instance, allowing for more general spaces $\M$, considering other smoothness classes than $H^s_p(\M)$, and replacing the expected worst case error $\wce$ by alternative error functionals such as the average error, cf.~\cite{Nowak:2010rr,Ritter:2000xd}. 

\bibliographystyle{amsplain}
\bibliography{../biblio_ehler2}

\end{document}